\documentclass[12pt,reqno]{amsart}
\usepackage{graphics,amsmath,amsfonts,amssymb}
\usepackage{epsfig,verbatim}
\usepackage[colorlinks,linkcolor=blue,
citecolor=red,linktocpage=true]{hyperref}
\usepackage{color,xcolor}

 \graphicspath{{./figs/}}

\usepackage{tikz}

\parskip = 2.0mm
\newtheorem{theorem}{Theorem}[section]

\DeclareMathOperator{\csch}{csch}
\def\sympow{{\setbox0\hbox{$\bigcirc$}\setbox1\hbox to\wd0{\hss$s$\hss}%
\wd1 0pt\box1\box0}}

\def\sech{\, {\rm sech}\,} 

\begin{document}
\title[Restricted $N$-body problems]{Corrigendum to: Oscillatory motions in restricted $N$-body problems [J. Differential Equations 265 (2018) 779--803] }
\author[Alvarez, Garc\'{\i}a, Palaci\'an, Yanguas ]{M. Alvarez-Ram\'{\i}rez*, A. Garc\'{\i}a*, J.F. Palaci\'an**, P. Yanguas**}
\address{*Dpto. Matem\'aticas, UAM--Iztapalapa, San Rafael Atlixco 186, Col. Vicentina, 09340 Iztapalapa, M\'exico City, M\'exico.}
\address{**Dpto. Estad{\'\i}stica, Inform\'atica y Matem\'aticas and Institute for advanced materials (InaMat), Universidad P\'ublica de Navarra, 31006 Pamplona, Spain.}

\email{mar@xanum.uam.mx, agar@xanum.uam.mx}

\email{palacian@unavarra.es, yanguas@unavarra.es}
\date{}
\keywords{Restricted $N$-body problem; central configurations; cometary case; symplectic scaling; invariant manifolds at infinity; McGehee's coordinates; Melnikov function; transversality of manifolds; Smale's horseshoe; oscillatory motions}
\maketitle

\begin{abstract}
We consider the planar restricted $N$-body problem where the $N-1$ primaries are assumed to be in a central configuration whereas the infinitesimal particle escapes to infinity in a parabolic orbit. We prove the existence of transversal intersections between the stable and unstable manifolds of the parabolic orbits at infinity which guarantee the existence of a Smale's horseshoe. This implies the occurrence of chaotic motions but also of oscillatory motions, that is, orbits for which the massless particle leaves every bounded region but it returns infinitely often to some fixed bounded region. Our achievement is based in an adequate scaling of the variables which allows us to write the Hamiltonian function as the Hamiltonian of the Kepler problem multiplied by a small quantity plus higher-order terms that depend on the chosen configuration. We compute the Melnikov function related to the first two non-null perturbative terms and characterize the cases where it has simple zeroes. Concretely, for some combinations of the configuration parameters, i.e. mass values and positions of the primaries, the Melnikov function vanishes, otherwise it has simple zeroes and the transversality condition is satisfied. The theory is illustrated for various cases of restricted $N$-body problems, including the circular restricted three-body problem. No restrictions on the mass parameters are assumed.
\end{abstract}

\section{Introduction}
In restricted $N$-body problems there exists a class of unbounded orbits called {\em oscillatory}, where the motion of the $N-1$ primaries remain bounded while the motion of the infinitesimal mass  is unbounded, but nevertheless it leads back near the primaries an infinite number of times. This type of motion was hypothesized by Chazy \cite{chazy} for the 3-body problem, Sitnikov \cite{sitnikov} was the first to construct an  oscillatory motion in a restricted 3-body problem, he also described an initial condition set for such solutions. Later on, using the theory of quasi-random dynamical systems, Alekseev \cite{alekseev} studied the Sitnikov problem showing the  existence of oscillatory motion for  small but positive infinitesimal mass. By the same time Melnikov \cite{melnikov63} and Arnold \cite{arnold64} developed a method to study the formation of transversal intersections of stable and unstable manifolds. This technique, now known as the Melnikov method, replaces variational equations by the computation of certain integrals. 

An orbit  of an infinitesimal particle in a restricted $N$-body problem is {\em parabolic} if the infinitesimal particle escapes to infinity with zero limit radial velocity. In \cite{mcgehee} McGehee  introduced a suitable set of coordinates  that brings the infinity into the origin. He also proved that the set of parabolic orbits is formed by two smooth  manifolds, that can be regarded as the stable and unstable manifolds of an unstable periodic orbit, or a hyperbolic fixed point  in a suitable Poincar\'e map at infinity. This result was used by Moser \cite{moser} to clarify the proof of the existence of oscillatory motion in the Sitnikov problem. The key mechanism  is to show the existence of transversal homoclinic intersections of the parabolic manifolds, which leads to a Smale's horseshoe map that guarantees the existence of symbolic dynamics, giving rise to the existence of oscillatory orbits as a consequence.   

Llibre and Sim\'o \cite{LlibreSimo} followed Moser's approach to prove the existence of oscillatory solutions in the planar circular restricted 3-body problem (RPC3BP). They achieved it by demonstrating  the transversal intersection of the stable and unstable parabolic manifolds for a large Jacobi constant $C$ and a sufficiently small mass ratio $\mu$ between the primary bodies. Xia \cite{Xia92} treated the RPC3BP by the Melnikov method, where $\mu$ is used again as a perturbation parameter. He proved the transversality of the homoclinic manifolds for sufficiently small $\mu$, and $C$ close to $\pm\sqrt{2}$. After that, he used analytic continuation to extend the transversality to almost any value of $\mu$ with $C$ large enough. 

Guardia et al. \cite{famous} demonstrated  that oscillatory motions do occur in the RPC3BP for any $\mu\in (0,1/2]$ and a sufficiently large Jacobi constant. Their result was achieved through an asymptotic formula of the distance between the stable and unstable manifolds of infinity in a level set of the Jacobi constant, and making $C$ sufficiently large, they proved that these manifolds intersect transversally. More recently, Guardia et al. \cite{GMSS2017} followed the above method  to prove the existence of oscillatory solutions in the planar elliptic restricted 3-body problem, for any mass ratio $\mu\in (0,1/2]$ and small eccentricities of the Keplerian ellipses (the primaries perform nearly circular orbits). This is done by constructing an infinite transition chain of fixed points of the Poincar\'e map and then providing a lambda lemma which gives the existence of an orbit which shadows the chain.  

As far as the authors are aware, there are few studies in the literature related with oscillatory motions in restricted $N$-body problems for $N$ greater than or equal to four, see for instance \cite{chinos}. 

Our goal is to analyze the planar restricted $N$-body problem where the $N-1$ primaries form a given central configuration and the infinitesimal particle goes to infinity in a parabolic orbit. More specifically, we show the existence of the stable and unstable manifolds of the parabolic orbits at infinity by means of a Poincar\'e map and establish the transversal intersections between them. This is done by the application of McGehee's Theorem for the existence of manifolds associated to degenerate fixed points \cite{mcgehee}. The transversal intersections allow us to prove the existence of a Smale's horseshoe \cite{gh,Robin} and the subsequent occurrence of chaotic motions. Besides the appearance of oscillatory motions is ensured. These motions correspond to orbits such that the infinitesimal particle leaves every bounded region but it returns infinitely often to some fixed bounded region \cite{moser}. 

The approach carried out here does not require any restriction in the mass parameters of the primaries. Instead of using a small parameter related to the masses we make a symplectic scaling of the variables so that the infinitesimal particle is placed far away from the center of mass of the primaries. This is the so called {\em cometary problem} \cite{meyer81,meyer99}. The scaling is performed through the introduction of a small parameter that measures the distance from the infinitesimal particle to the origin. Thence, the resulting system is given by the Kepler Hamiltonian multiplied by a power of the small parameter plus higher-order terms that are easily obtained through Legendre polynomials. The perturbation depends on the specific configuration of the primaries.

The transversal intersection between the manifolds is proved by applying the Melnikov's method \cite{HolMar1,HolMar2,gh,Robin} related to the first two non-null perturbative term. Indeed we compute a general Melnikov function that works for all the configurations, determining when it has simple zeroes. There are some cases where the calculation of higher-order terms of the Melnikov's function are required. This concerns to two parameters which are given in terms of the masses and positions of the primaries.

In the context of oscillatory motions we generalize the analysis performed for the RPC3BP by considering any central configuration of the $N-1$ primaries. A key point in our analysis is that, instead of using a small parameter related to the masses we apply a symplectic scaling. This allows us to treat all the problems together so that we can check whether the Melnikov function for a specific configuration has simple zeros after replacing the coefficients of the configuration. In this manner we have been able to deal with a large variety of restricted problems in a systematic and straightforward way. 

This paper has been structured as follows. In Section \ref{sec1} we formulate the Hamiltonian of the restricted $N$-body problem where the primaries are in central configuration in a rotating and an inertial frame. We also define the cometary case, introducing an appropriate small parameter so that the problem is expressed as a 2-body Kepler Hamiltonian multiplied by a power of the small parameter plus a small perturbation. In Section \ref{sec2}, we introduce McGehee coordinates to study the behavior of the system in a vicinity of infinity as a Poincar\'e map near a homoclinic orbit of a degenerate periodic orbit with analytic stable and unstable manifolds. Section \ref{sec3} is devoted to the unperturbed problem that can be expressed as a Duffing oscillator. The main results will be given in Section \ref{sec4}, where we establish the existence of transversal homoclinic intersections of stable and unstable manifolds for the perturbed Hamiltonian system. We analyze the Melnikov function up to perturbation orders 4 and 6, depending on the non vanishing order of the Melnikov function. This is the fundamental part in our study, since it gives a systematic way of concluding the transversality between the corresponding invariant manifolds which only depends on the mass parameters and the configuration of the primaries. Then, we use that the Melnikov function has simple zeroes to show the transverse homoclinic intersections in the perturbed problem. In general, the corresponding integrals are hard to calculate, however, we make use of the asymptotic estimates provided in \cite{LlibreSimo,Regina,RegSim}. In Section \ref{sec5}  we illustrate the theory developed in the previous sections in some restricted $N$-body problems. In the RPC3BP, we show that in general it is enough to calculate the terms of order 4 and 6 in $\varepsilon$. In addition to the above, other examples are considered, such as the restricted 4-body problem with primaries in equilateral (Lagrange) configuration, the 5-body problem with primaries in rhomboidal configuration, the collinear restricted $N$-body problem and some polygonal restricted $N$-body problems. Finally, the study of the Melnikov functions is addressed in the Appendix.

All the numeric and symbolic calculations have been performed with Mathematica. We have made the computations within 50 significant digits although we only show the first eight.

\section{Problem statement}\label{sec1}
The planar restricted $N$-body problem is the study of the motion of an infinitesimal mass particle subject to the Newtonian gravitational attraction of $N-1$ bodies called the primaries. It is assumed that the masses of the primaries are so big in relation to the mass of the infinitesimal particle that the latter does not exert any significant influence in the primaries, hence the motion of the primaries becomes an ($N-1$)-body problem. Along this paper the primaries move in a central configuration rotating around their center of mass with a constant angular velocity. Without loss of generality we also set the total mass of the primaries to one and their angular velocity $\omega=1$. 

The Hamiltonian that governs the motion of the infinitesimal particle in  a rotating frame is  
\begin{equation}\label{pico}
H(Q,P)= \frac{1}{2} \left| P \right|^{2} - Q^TJ P - U(Q), 
\end{equation}
where $Q, P\in \mathbb{R}^2$ are the position and momentum, $J=\left(\begin{smallmatrix} 0&1\\-1&0 \end{smallmatrix} \right)$ and the potential is given by
\begin{equation}
U(Q ) = \sum_{k=1}^{N-1} \frac{m_k}{\left|Q-a_k \right|}.
\end{equation}
(We will at times identify $\mathbb{R}^2$ and $\mathbb{C}$, i.e., $(x, y) \in \mathbb{R}^2$ with $x+y\,i \in \mathbb{C}$.)The terms $a_k=a_{k1}+ a_{k2}\,i$ and $m_k$ correspond to the position and mass respectively, of the $k$-th primary, $k=1,\dots, N-1$. Since they are in central configuration the following equations are satisfied:  
\begin{equation}\label{cc}
	a_k= -
	\sum_{\stackrel{j=1}{j\neq k}}^{N-1} 
	\frac{m_j (a_j - a_k)}{|a_j-a_k|^3}, 
	\;\; k=1,\dots , N-1\; \mbox{ and }
	\; \sum_{k=1}^{N-1} m_k a_k=0,
\end{equation} 
with $m_1+ \cdots + m_{N-1}=1$. We observe that the Hamiltonian $H$ represents an autonomous system with two degrees of freedom. This quantity is preserved through the changes of coordinates in spite of the fact that some characteristics, like the time independence of the flow or the Hamiltonian structure, are lost. More details related to the restricted $N$-body problem can be seen in \cite{meyer81,meyer99}.

In order to work in an inertial frame we define the change of coordinates $(Q,P) = e^{-t J}(q,p)$. The Hamiltonian accounting for the motion of the infinitesimal particle in inertial coordinates yields
\begin{equation}
H(q,p,t) = \frac{1}{2} \left| p \right|^{2} - U(q,t), \label{pez}
\end{equation}
where the potential is given by
\begin{equation}
U(q,t) = \sum_{k=1}^{N-1} \frac{m_k}{\left|e^{-t J}q-a_k \right|}.\end{equation} 
{Now the Hamiltonian is time dependent.} 

We are interested in the study of the motion of the infinitesimal particle near parabolic orbits, that is when the particle escapes to infinity with zero limit velocity. Thus, it is convenient to scale the Hamiltonian by introducing a small positive parameter $\varepsilon$ through the change $q \to \varepsilon^{-2}q$, $p\to \varepsilon p$. This is a symplectic transformation with multiplier $\varepsilon$. Hence $H \to \varepsilon H$. This problem now is the cometary regime of the restricted $N$-body problem, see \cite{meyer81,meyer99,meyer09}.

Thus, Hamiltonian (\ref{pez}) becomes
\begin{equation}\label{h1}
	H_\varepsilon(q,p,t) = \varepsilon^3 \left(\frac{1}{2} |p|^2 -  
	\sum_{k=1}^{N-1}  
	\frac{m_k}{\left|\varepsilon^{-2}e^{-t J}q-a_k \right|} \right).
\end{equation}
The expansion of the potential in (\ref{h1}) in terms of Legendre polynomials yields 	
\begin{equation}\label{hamm}
	H_\varepsilon(q,p,t) =   	
	\varepsilon^3 \left( \frac{1}{2}  |p|^2  
	- \frac{1}{|q|} \right)
	- \sum_{j=2}^\infty \frac{\varepsilon^{2j+3}}{|q|^{j+1}}
	\sum_{k=1}^{N-1} m_k |a_k|^j 
	P_j(\cos\gamma_k),
\end{equation}	 
where $P_j$ is the $j$-th term of the Legendre polynomial, and $\gamma_k$ is the angle between the $k$-th primary's position and $e^{-t J}q$.
Let us note that the zero term of the sum is $-1/|q|$ and the next term is zero because we have placed the center of mass at the origin. Hence the problem is a Kepler problem (multiplied by $\varepsilon^3$) plus a perturbation term of order $\varepsilon^{7}$. The series is convergent in the region of the configuration space where $\varepsilon^2 |a_k| < |q|$.

Now we make the symplectic polar change of variables 
\[ q = r e^{i \theta}, \qquad p = R e^{i \theta} + \frac{\Theta}{r} i e^{i \theta}, \]
where $r$ is the distance of the infinitesimal particle to the origin, $\theta$ is the argument of latitude, $R$ stands for the radial velocity and $\Theta$ for the angular momentum. For our analysis we restrict $\Theta$ to be non-zero and bounded, which is equivalent to consider the angular momentum before the scaling being a big quantity as long as $\varepsilon$ is small.

Let  $\alpha_k$ be the angle between the position of $k$-th primary and the horizontal axis of the inertial frame, thus $ \gamma_k = \alpha_k - (\theta- t)$. Then, Hamiltonian (\ref{hamm}) can be written as
\begin{equation} \label{hamm3}
\hspace*{-0.2cm}\begin{array}{rcl}
	H_\varepsilon(r, \theta, R, \Theta,t) &\hspace*{-0.2cm}=\hspace*{-0.2cm}&
	\displaystyle \varepsilon^3 \left(\frac{1}{2} \left( R^2 +\frac{\Theta^2}{r^2} \right) - \frac{1}{r}  \right) \\[1ex]
	&& \hspace*{-0.19cm}\displaystyle - \sum_{j=2}^{\infty}  \frac{\varepsilon^{2j +3}}{r^{j+1}} \sum_{k=1}^{N-1}  m_k  
	|a_k|^j  P_j \left(\cos  \left( \alpha_k  -(\theta- t) \right)  \right). 
	\end{array} \end{equation}
We observe that the argument of the Legendre polynomial $P_j$ depends on $a_k$ through the relations $\cos (\alpha_k)=a_{k1}/|a_k|$ and $\sin (\alpha_k)=a_{k2}/|a_k|$.
	
The associated Hamiltonian system is given by
\begin{eqnarray} \label{pato}
	  \dot r &\hspace*{-0.2cm}=\hspace*{-0.2cm}& \displaystyle \varepsilon^3 R, \nonumber \\	  
	  \dot R &\hspace*{-0.2cm}=\hspace*{-0.2cm}& \displaystyle \varepsilon^3 \left(-\frac{1}{r^2}
	              + \frac{\Theta^2}{r^3} \right) \nonumber \\ && 
	                       \displaystyle -\sum_{j=2}^{\infty} 
				\frac{(j+1) \varepsilon^{2j+3}}{r^{j+2}}
				\sum_{k=1}^{N-1} m_k \left| a_k \right|^j  
				P_j  \left(\cos  \left( \alpha_k - (\theta- t) \right) \right), \\ 
	   \dot \theta &\hspace*{-0.2cm}=\hspace*{-0.2cm}& \displaystyle \varepsilon^3 \frac{\Theta }{r^{2}}, \nonumber \\ 
	   \dot \Theta &\hspace*{-0.2cm}=\hspace*{-0.2cm}& \displaystyle \sum_{j=2}^{\infty}  \frac{ \varepsilon^{2j+3}}{r^{j+1}} 
					\sum_{k=1}^{N-1} m_k  |a_k|^j
					Q_j  \left(\cos \left( \alpha_k  -(\theta- t) \right)  \right) 
					\sin  \left(  \alpha_k  -(\theta- t) \right),  \nonumber
	 \end{eqnarray}
where $Q_j(w) = d P_j(w)/dw$. 

\section{Application of McGehee's Theorem}\label{sec2}
In order to study the motion of the infinitesimal mass near infinity, we make the change of coordinates introduced by McGehee \cite{mcgehee}:  
\[ r= x^{-2}, \qquad R=-\sqrt{2} y, \qquad s = t-\theta, \]
where $s\in \mathbb{S}^1$. In these new coordinates the equations (\ref{pato}) become
\begin{equation}\label{oso}
	\begin{array}{rcl}
	 	\dot{x} &\hspace*{-0.2cm}=\hspace*{-0.2cm}& \mbox{$\frac{1}{\sqrt{2}}$} \varepsilon^3 x^3 y,\\[1.2ex]
	        \dot{y} &\hspace*{-0.2cm}=\hspace*{-0.2cm}& \varepsilon^3 \left( \mbox{$\frac{1}{\sqrt{2}}$} x^4 - \mbox{$\frac{1}{\sqrt{2}}$} \Theta^2\, x^6 \right) \\ && \displaystyle  -\mbox{$\frac{1}{\sqrt{2}}$}
		\sum_{j=2}^{\infty} (j+1) \varepsilon ^{2 j +3} x^{2 j+4 }
	   	\sum_{k=1}^{N-1}  
  	    	m_k \left|a_k\right|^j  
  	    	P_j\left(\cos  \left(\alpha _k+s\right) \right), \\[1.2ex]
	\dot{s} &\hspace*{-0.2cm}=\hspace*{-0.2cm}& 1 - \varepsilon^3 x^4\,\Theta,  \\
	\dot{\Theta} &\hspace*{-0.2cm}=\hspace*{-0.2cm}& \displaystyle \sum_{j=2}^{\infty}   \varepsilon ^{2 j+3} x^{2 j +2}
			\sum_{k=1}^{N-1}  
			m_k \left|a_k\right|^{j}    
			\sin ( \alpha _k+s ) Q_j 
			\left(\cos \left(    \alpha _k+s \right)  \right).
	\end{array}
\end{equation}

Similarly to the circular restricted 3-body problem, the above system can be smoothly extended to $x=0$; in addition, it has a Jacobi-like first integral given by equating Hamiltonian $H$ in (\ref{pico}) to a constant. Let $C$ be an arbitrary value of it. Thus 
\begin{equation}\label{jacobi1} 
C = H_\varepsilon(x,y,\theta, \Theta,t) - \Theta,
\end{equation} 
where $H_\varepsilon(x,y,\theta, \Theta,t)$ is given in (\ref{hamm3}) after replacing $r$ and $R$ for their values in terms of  $x$ and $y$. In order to verify the hypotheses of McGehee's Theorem \cite{mcgehee}, we observe that $\Theta$ can be written in terms of $x$, $y$, $s$  and 
the value of the Jacobi-like integral $C$ as follows:
\begin{equation}\label{gato} 
	 \Theta = \frac{1 \pm \sqrt{1 + 2 \varepsilon^3 x^4 (C+ \varepsilon^3(x^2-y^2))}}{\varepsilon^3 x^4} + O(\varepsilon^{7}).
\end{equation}

We take the negative sign in (\ref{gato}) because we are interested in small values of $x$. Therefore, the variable $\Theta$ depends smoothly on $x\geq 0$, $y\in\mathbb{R}$, $s\in \mathbb{S}^1$. In fact $\Theta =-C$ when $x=y=0$, hence the equation referring to $\dot{\Theta}$ can be dropped in system (\ref{oso}). Expanding (\ref{gato}) in power series around $x=0$, replacing its value in (\ref{oso}), the resulting system becomes:
\begin{equation}\label{puma}
	\begin{split}
	 	\dot{x} &= \mbox{$\frac{1}{\sqrt{2}}$} \varepsilon^3 x^3 y, \\
		\dot{y} &= \mbox{$\frac{1}{\sqrt{2}}$} \varepsilon^3 x^4 
				+\varepsilon^3 x^6 f(x,y,\varepsilon,s,C), \\
		\dot{s} &= 1 + C \varepsilon^3 x^4 
		              + \varepsilon^6 x^6 - \varepsilon^6 x^4 y^2 + 
				\varepsilon^6 x^8 g(x,y,\varepsilon,s,C).  
	\end{split}
\end{equation}

Observe that the functions $f(x,y,\varepsilon,s,C)$ and $g(x,y,\varepsilon,s,C)$ are real analytic.

Now, we are going to exploit the fact that the solution $\gamma (t)$ of system (\ref{puma}) with  $x=0$ and $y=0$ is given by
\begin{equation} 
x(t)=0,  \quad  y(t)=0 , \quad s(t)=s_0 + t\; \mbox{mod}\; 2\pi.
\label{pan}
\end{equation}

This solution does not depend on the Jacobi-like first integral $C$ and is a $2\pi$-periodic solution in $t$. Let $\Sigma= \{ \left(x,y, s  \right): s=s_0 \}$ be a  transversal section to this periodic orbit. This set is parametrized by the coordinates $x$ and $y$. For the point $\kappa_0=\left(0,0,s_0 \right)\in \gamma\cap \Sigma$, the Implicit Function Theorem shows that there exists an open set $V \subset \Sigma$ containing $\kappa_0$ and a smooth function $\sigma: V \to \mathbb{R}$, the return time, such that the trajectories starting in $V$ come back to $\Sigma$ in a time $\sigma$ close to $2\pi$. 

Let $\varphi^t(x_0, y_0) = (x(t, x_0, y_0), y(t, x_0, y_0), s(t, x_0, y_0))$ be the flow solution of (\ref{puma}) with initial condition $(x(0), y(0), s(0))=(x_0,y_0,s_0\,\mbox{mod}\,2\pi)$. This solution depends on $\varepsilon$, but it is usually omitted.

The Poincar\'e map $\mathcal{P}: V\subset \Sigma \to \Sigma$ of the periodic orbit $\gamma$ (or of the fixed point $(0,0,s_0)$) is given by $\mathcal{P}(x,y)=\varphi^{\sigma (x,y)}(x,y)$ where
\begin{equation} \label{poincaremap}
	\mathcal{P}:
	\left\{
		\begin{array}{ll}
			x \to & \hspace*{-0.2cm} x +
				\sqrt{2}\:     \pi   \varepsilon^3 x^3 y + \varepsilon^6 x^7 y \, r_1(x,y,\varepsilon)						  \\[-1ex]
			&\\
			y\to & \hspace*{-0.2cm} y + \sqrt{2}\: \pi   \varepsilon^3 x^4 
				(1-C^2 x^2)+\varepsilon^6 x^6 r_2(x,y,\varepsilon)
		\end{array}
	\right.
\end{equation}
and $r_1$ and $r_2$ are real analytic functions. 

A straightforward computation shows that if we make the transformation $x= u+v$, $y=v-u$, the map $\mathcal{P}$ takes the form 
\[
	\tilde{\mathcal{P}}:
	\left\{
		\begin{array}{ll}
			u \to & \hspace*{-0.2cm} u + \varepsilon^3 p_1(u,v)+  \varepsilon^3 s_1(u,v,\varepsilon) 
									  \\[-1ex]
			&\\
			v\to & \hspace*{-0.2cm} v + \varepsilon^3 p_2(u,v) + \varepsilon^3 s_2(u,v,\varepsilon)
		\end{array}
	\right.
\] 
where $p_1(u,v)=-\sqrt{2} \pi u (u+v)^3$, $p_2(u,v)=\sqrt{2} \pi v (u+v)^3$ and $s_1$ and $s_2$ are real analytic functions starting with homogeneous polynomials of degree 6 in $u$ and $v$. For $u>0$ we have $p_1(u,0) = - \sqrt{2} \pi u^4 < 0$, $p_2(u,0)=0$ and $\frac{\partial p_2}{\partial v }(u,0) = \sqrt{2} \pi u^3 >0$.
If $\varepsilon\geq\varepsilon_0>0$ is small enough, these are the hypotheses of McGehee's Theorem \cite{mcgehee}, which we proceed to state in our context now. 

For $\delta=\delta (\varepsilon)>0$ and $\beta=\beta (\varepsilon)>0$, the sector centred on the line $y=-x$ is defined by
$B_1(\delta, \beta )=
\{ (x,y): 0\leq x \leq \delta, 
		(-1-\beta)x\leq y\leq (-1+\beta)x \}$.  
Similarly,  the
sector centred on the line $y=x$ is 
$B_2(\delta, \beta )=
\{ (x,y): 0\leq x \leq \delta, 
		(1-\beta)x\leq y\leq (1+\beta)x \}$.
The sets
\begin{align*}
&W^s_\varepsilon(0,0) =\big\{ 
		(x,y)\in B_1(\delta, \beta ): \;
	\forall n\geq 0, \,\,  \mathcal{P}^n(x,y) \in  B_1( \delta, \beta),  \\
	 & \hspace*{2cm}\; \lim_{n\to \infty}\mathcal{P}^n(x,y) = (0,0) 
\big\},\\[1ex]
& W^u_\varepsilon(0,0) =\big\{ 
		(x,y)\in B_2(\delta, \beta ): \;
	\forall n\leq 0, \,\, \mathcal{P}^n(x,y) \in  B_2( \delta, \beta),  \\
	 &\hspace*{2cm} \; \lim_{n\to -\infty}\mathcal{P}^n(x,y) = (0,0) 
\big\},
\end{align*}
are called the stable and unstable manifolds of the fixed point $(0,0)$.  

\begin{theorem}[McGehee, \cite{mcgehee}]
For the map $\mathcal{P}: V\subset \Sigma \to \Sigma$ given by (\ref{poincaremap}), there is $\delta=\delta(\varepsilon)>0$ and  
$\beta=\beta(\varepsilon)>0$ such that the manifolds $W^s_\varepsilon(0,0)\subset B_1(\delta, \beta )$ and $W^u_\varepsilon(0,0)\subset B_2(\delta, \beta )$ correspond to the graphs 
of two functions $\psi_s$, $\psi_u:[0,\delta] \to \mathbb{R}$ that are smooth, real analytic in $(0,\delta]$, and $\psi_s(0)= \psi_u(0)=0$, $\psi_s'(0)=-1$, $\psi_u'(0)=1$. In addition to that, they vary uniform and smoothly for $\varepsilon\geq\varepsilon_0>0$ small enough. \end{theorem}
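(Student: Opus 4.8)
The statement is a direct application of McGehee's stable--manifold theorem for degenerate fixed points \cite{mcgehee}: the plan is to check that the near-identity map $\mathcal{P}$ of (\ref{poincaremap}), once written in the variables $x=u+v$, $y=v-u$, meets McGehee's hypotheses, to invoke his theorem to obtain the two invariant manifolds as graphs in the $(u,v)$ plane, and then to translate the conclusions back through this linear change.

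For the stable manifold I would work with the normal form
\[
\tilde{\mathcal{P}}:\ u\mapsto u+\varepsilon^3 p_1(u,v)+\varepsilon^3 s_1(u,v,\varepsilon),\qquad v\mapsto v+\varepsilon^3 p_2(u,v)+\varepsilon^3 s_2(u,v,\varepsilon),
\]
recalling that $p_1,p_2$ are homogeneous of degree $4$, that $s_1,s_2$ are real analytic and start at degree $6$ in $(u,v)$, and that $p_1(u,0)=-\sqrt2\,\pi u^4<0$, $p_2(u,0)=0$, $\partial_v p_2(u,0)=\sqrt2\,\pi u^3>0$ for $u>0$. These sign conditions say that along the positive $u$-axis the $u$-component is strictly driven toward the origin while the transverse $v$-component is expanded; together with analyticity of the remainders and with $\varepsilon$ bounded away from $0$ (so the degree-$4$ part dominates the degree-$\ge 6$ tail), they are exactly the hypotheses of McGehee's theorem. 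I would then use his construction: one fixes $\delta=\delta(\varepsilon)$ and $\beta=\beta(\varepsilon)$ small enough that the sector $B_1(\delta,\beta)$ about the positive $u$-axis is forward invariant, and the graph transform acting on curves $\{v=h(u)\}\subset B_1$ becomes a contraction in the relevant $C^0$, $C^k$ and analytic norms; its fixed point $h_s$ is the stable manifold, and monotone decay of $u$ along orbits forces $\mathcal{P}^n\to(0,0)$. McGehee's regularity conclusion gives $h_s\in C^\infty[0,\delta]$, real analytic on $(0,\delta]$; since in the $(x,y)$ variables the $u$-axis is the line $y=-x$, the graph becomes $y=\psi_s(x)$ with $\psi_s(0)=0$, $\psi_s'(0)=-1$, and $W^s_\varepsilon(0,0)\subset B_1(\delta,\beta)$.

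For the unstable manifold I would apply the same theorem to $\mathcal{P}^{-1}$. Since $D\mathcal{P}(0,0)$ is the identity, $\mathcal{P}$ is a near-identity real-analytic diffeomorphism on a neighborhood of the origin, so $\mathcal{P}^{-1}$ is real analytic there and its $(u,v)$-normal form is $\mathrm{id}-\varepsilon^3(p_1,p_2)+(\text{order}\ \ge 6)$. Now the two axes exchange roles: along the positive $v$-axis the $v$-component of $\mathcal{P}^{-1}$ is contracted (leading term $-\sqrt2\,\pi v^4<0$), the $u$-component vanishes on the axis, and $\partial_u$ of that $u$-component there equals $\sqrt2\,\pi v^3>0$; so McGehee's theorem applied to $\mathcal{P}^{-1}$ in the sector $B_2(\delta,\beta)$ produces a manifold tangent to the $v$-axis, i.e.\ to the line $y=x$. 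This is precisely $W^u_\varepsilon(0,0)$, the backward-invariant set converging to $(0,0)$, given as the graph of $\psi_u$ with $\psi_u(0)=0$, $\psi_u'(0)=1$, again $C^\infty$ and real analytic off the origin.

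For the uniformity in $\varepsilon$ I would invoke the parametrized version of the graph-transform argument: the data entering the construction---$p_1,p_2$, the remainders $r_1,r_2,s_1,s_2$, hence the contraction rate and the admissible $\delta,\beta$---depend smoothly (indeed real analytically) on $\varepsilon$ and can be chosen uniformly for $\varepsilon$ in a small interval $[\varepsilon_0,\varepsilon_1]$ with $\varepsilon_0>0$; the fixed point of a uniformly contracting, smoothly $\varepsilon$-dependent family is itself smooth in $\varepsilon$, together with all its $u$-derivatives. The one genuinely delicate ingredient is the degeneracy of the linear part (both multipliers equal $1$, so there is no spectral gap to exploit), which is exactly what McGehee's sector estimates are designed to overcome; the only additional thing one must verify here is that those estimates are uniform in $\varepsilon$, and this holds because the common factor $\varepsilon^3$ scales out of the dominant homogeneous terms while the higher-order remainders stay uniformly bounded on $[\varepsilon_0,\varepsilon_1]$.
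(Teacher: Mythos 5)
Your proposal is correct and follows essentially the same route as the paper: the paper verifies exactly the hypotheses you check (the form of $p_1,p_2$ after the change $x=u+v$, $y=v-u$, the sign conditions $p_1(u,0)<0$, $p_2(u,0)=0$, $\partial_v p_2(u,0)>0$) and then simply invokes McGehee's theorem from \cite{mcgehee}, which is what you do, with the added (and consistent) sketch of the sector/graph-transform construction, the passage to $\mathcal{P}^{-1}$ for $W^u_\varepsilon(0,0)$, and the uniformity in $\varepsilon\ge\varepsilon_0>0$.
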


The sets $W^s_\varepsilon(0,0)$ and $W^u_\varepsilon(0,0)$ are curves, that is, one-dimensional manifolds. From here it follows that  
\begin{align*}
& W^s_\varepsilon(\gamma)=\left\{
				\varphi^t\left(x_\varepsilon,y_\varepsilon\right): 
					t\geq 0,\; \left(x_\varepsilon,y_\varepsilon\right)\in W^s_\varepsilon(0,0)
			\right\},\\
& W^u_\varepsilon(\gamma)=\left\{
				\varphi^t\left(x_\varepsilon,y_\varepsilon\right): 
					t\leq 0, \; \left(x_\varepsilon,y_\varepsilon\right)\in W^u_\varepsilon(0,0)
			\right\},			
\end{align*}
are smooth manifolds of dimension two. They are formed by the orbits that escape to infinity ($x=0$) with zero velocity ($y=0$). These are called the {\em parabolic manifolds}.

\section{The main term of the Hamiltonian}\label{sec3}
The main part in Hamiltonian (\ref{hamm3}) corresponds to the Kepler problem, and the related equations (\ref{oso}) take the form
\begin{equation} \label{leon} 
\begin{array} {ll}
\dot{x} = \displaystyle \mbox{$\frac{1}{\sqrt{2}}$} \varepsilon^3 x^3 y, & \qquad	
\dot{y} = \displaystyle \varepsilon^3 \left(\mbox{$\frac{1}{\sqrt{2}}$} x^4 - \mbox{$\frac{1}{\sqrt{2}}$} \Theta^2 x^6 \right),\\
& \\
\dot{s} = 1 - \varepsilon^3 x^4 \Theta, 
&\qquad \dot{\Theta} = 0. 
\end{array} 
\end{equation} 
At this point it is convenient to introduce a new time through $d\tau/dt = \varepsilon^3 x^3/\sqrt{2}$.   Fixing $\Theta = \Theta_0$ non zero, the equations for $x$ and $y$ become 
\begin{equation}\label{rata}
	\frac{d\,x}{d\, \tau} = x' = y,\qquad   
	\frac{d\,y}{d\, \tau} = y' = x - \Theta_{0}^2 x^3.
\end{equation}
Thus (\ref{rata}) is a $\Theta_0$-parametrized Duffing equation. The origin $(0,0)$ is a hyperbolic saddle point and its stable and unstable manifolds form a homoclinic orbit  $\xi (\tau)$ given by
\begin{equation}\label{xysol}
x(\tau) = \frac{\sqrt{2}}{|\Theta_0|} \sech {\tau}, \qquad
y (\tau) = -\frac{\sqrt{2}}{|\Theta_0|}\tanh \tau\sech \tau,
\end{equation}
connecting the fixed point with itself as shown in Fig. \ref{fig_duffin}.  
\begin{figure}
\centering
\includegraphics[scale=0.26]{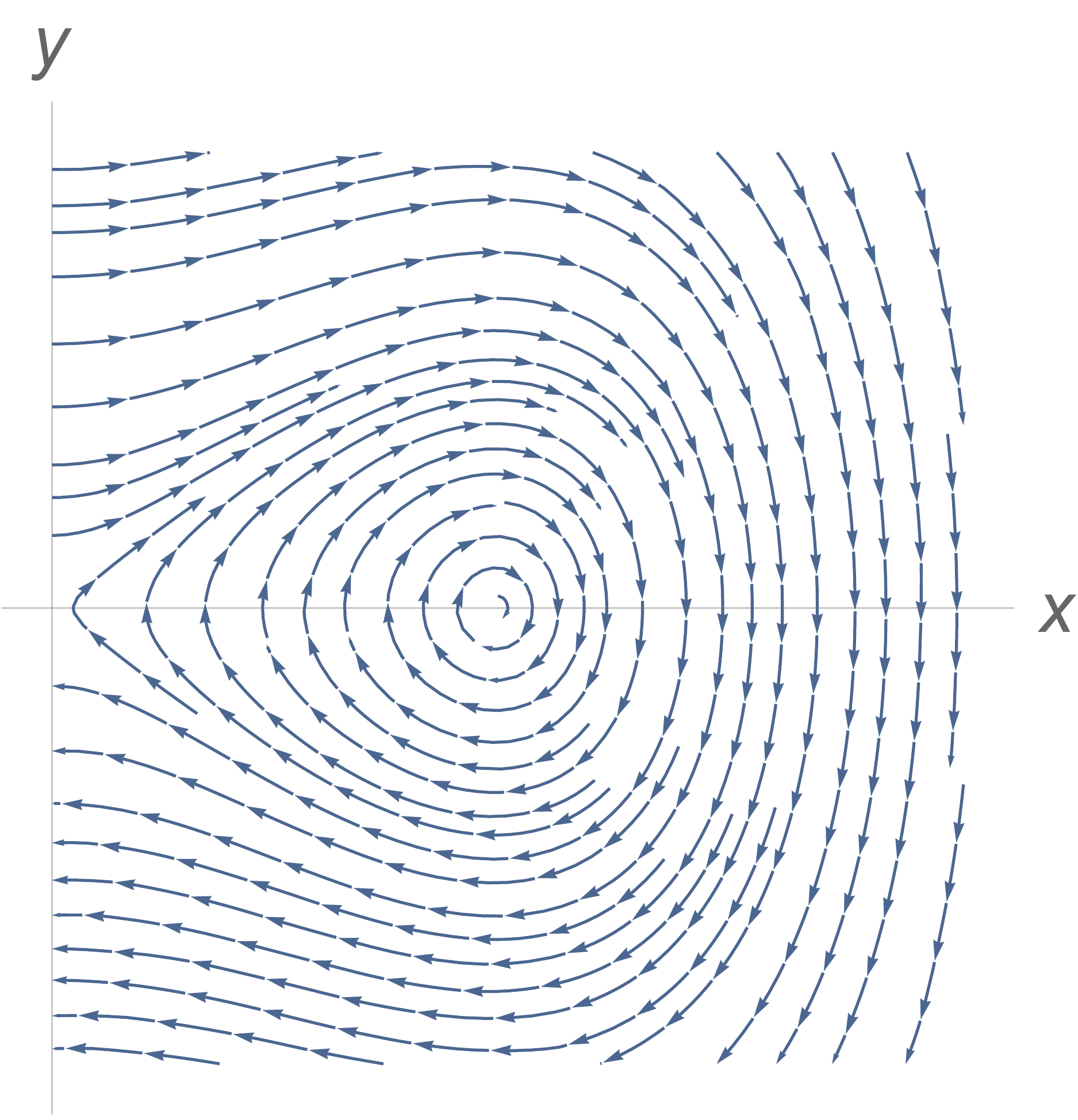} \qquad
\includegraphics[scale=0.26]{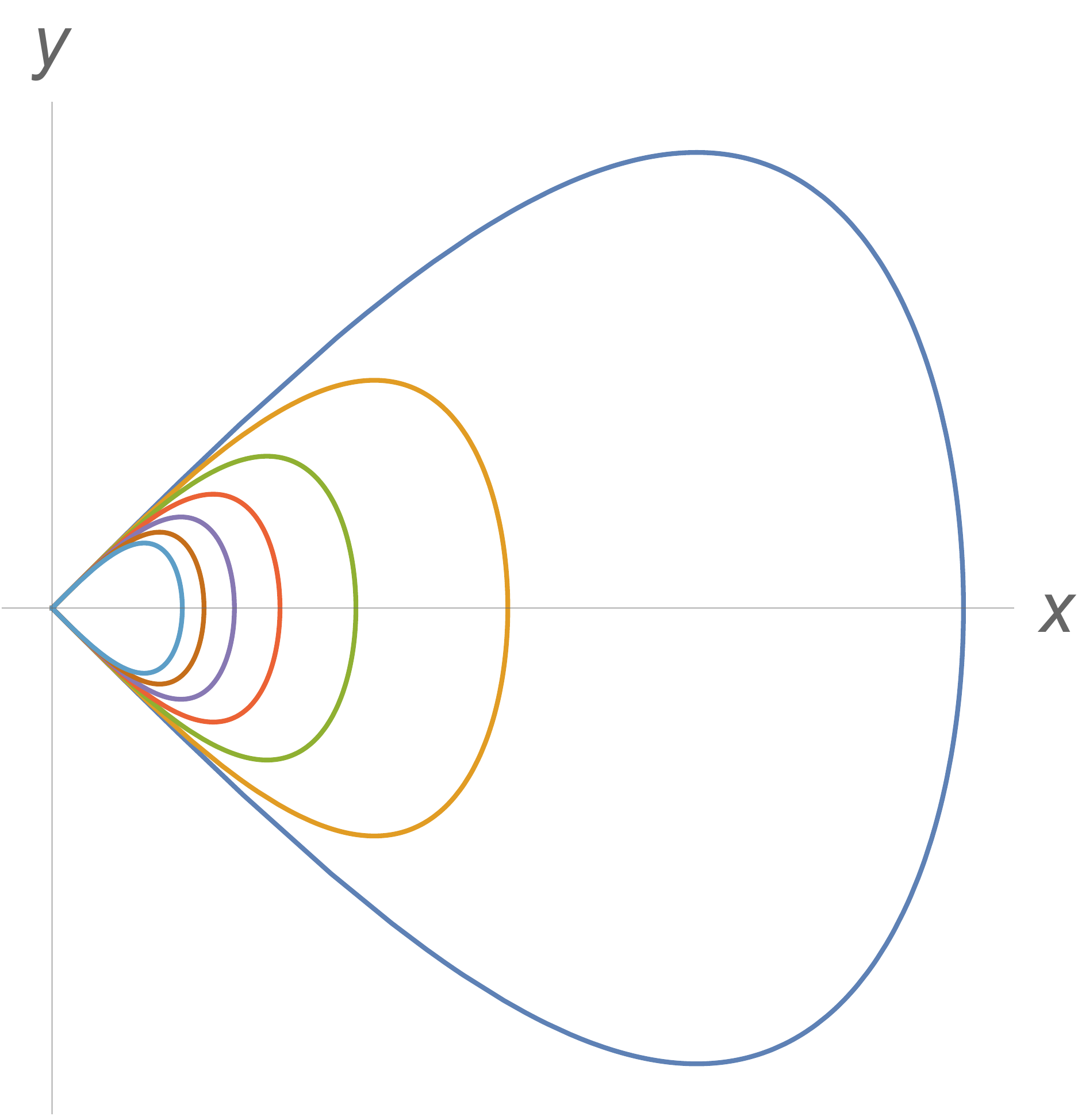}
\caption{From
left to right, we show the flow of the Duffing Hamiltonian and the homoclinic loop for different values of $\Theta_0$.}\label{fig_duffin}
\end{figure}

The Keplerian Hamiltonian associated to (\ref{rata}) is rewritten as the Duffing Hamiltonian:
\begin{equation} \label{ham_e0}
H_D = \mbox{$\frac{1}{2}$} y^2- \mbox{$\frac{1}{2}$} x^2 + \mbox{$\frac{1}{4}$} x^4 \Theta_0^2.
\end{equation}

\section{Perturbed Problem} \label{sec4}

In this section we compare equations (\ref{puma}) and (\ref{leon}). We observe that the Poincar\'e map of the last one has $(0, 0)$ as a fixed point with a homoclinic orbit parametrized by \eqref{xysol}. 
The fixed point is preserved by the Poincar\'e map of \eqref{puma}, however the homoclinic orbit
is broken into the curves $W^s_\varepsilon(0,0)$ and $W^u_\varepsilon(0,0)$. Indeed, the smooth dependence on $\varepsilon$ implies that $W^s_\varepsilon(\gamma)$ and $W^u_\varepsilon(\gamma)$ are parametrized by orbits of the form
\begin{equation}\label{paramet}
\begin{array}{rcl}
	\varphi_s (\tau, x_\varepsilon^s, y_\varepsilon^s)
	= \xi(\tau - \tau_0) 
	+ \varepsilon^4 \varphi_s^1(\tau, \tau_0, \varepsilon) + ..., \text{ for } \tau \geq \tau_0,
\\[1.5ex]
	\varphi_u (\tau, x_\varepsilon^u, y_\varepsilon^u)
	= \xi(\tau - \tau_0) 
	+  \varepsilon^4 \varphi_u^1(\tau, \tau_0, \varepsilon) + ..., \text{ for } \tau \leq \tau_0,
\end{array} \end{equation}
with $\tau_0=\tau(s_0)$, the functions $\varphi_s^1$ and $\varphi_u^1$ are determined by the first variational equation of the $\varepsilon^4$-perturbation of (\ref{rata}) along the orbit $\xi(\tau)$, that is, the corrections to $\xi(\tau - \tau_0)$ corresponding to the perturbation of the Duffing Hamiltonian of order $\varepsilon^4$, the next terms, i.e. $\varphi_s^2$, $\varphi_u^2$, correspond to the perturbation of order $\varepsilon^7$, etc. Besides, $\varphi_s (\tau, x_\varepsilon^s, y_\varepsilon^s)$, $\varphi_u (\tau, x_\varepsilon^u, y_\varepsilon^u)$ are taken so that $\varphi_s (\tau_0, x_\varepsilon^s, y_\varepsilon^s), \varphi_u (\tau_0, x_\varepsilon^u, y_\varepsilon^u)\in \Sigma$, guaranteeing the existence of a parametrization of the manifolds as above, see \cite{Xia92}.

Our purpose now is to determine the speed of breaking up of $W^s_\varepsilon(0,0)$ and $W^u_\varepsilon(0,0)$ (equivalently the breaking up of $W^s_\varepsilon(\gamma)$ and $W^u_\varepsilon(\gamma)$) under the perturbation. We point out that the normal direction to the level sets of $H_D$ is the only one to be taken into account. 

To measure the rate of separation between $W^s_\varepsilon(\gamma)$ and $W^u_\varepsilon(\gamma)$ with respect to $\varepsilon$, we take
\[ H_D( \varphi_s (\tau_0, x_\varepsilon^s, y_\varepsilon^s)) -
   H_D( \varphi_u (\tau_0, x_\varepsilon^u, y_\varepsilon^u)) =
			\varepsilon^\nu {\mathcal M}_\nu(\tau_0,\varepsilon) + ..., \]
where
\begin{align*} 
	{\mathcal M}_\nu(\tau_0,\varepsilon)=& 
	\int_{-\infty}^{\infty} \frac{d H_D}{d \tau} ( \xi (\tau), \tau+\tau_0)\; d\tau
\end{align*}
is the first term of the Melnikov function, $x$, $y$ are evaluated in the unperturbed homoclinic and the derivative of $H_D$ with respect to $\tau$ is considered up to the perturbation of order $\varepsilon^{\nu}$ in (\ref{rata}) such that the function ${\mathcal M}_\nu$ is not identically zero. The ellipsis in the formula measuring the splitting of the manifolds should not be underestimated. More specifically we decompose
\begin{equation}\label{split} 
H_D( \varphi_s (\tau_0, x_\varepsilon^s, y_\varepsilon^s)) -
   H_D( \varphi_u (\tau_0, x_\varepsilon^u, y_\varepsilon^u)) =
			{\mathcal M}(\tau_0, \varepsilon) + {\mathcal R}(\tau_0, \varepsilon), \end{equation}
where \[ {\mathcal M}(\tau_0, \varepsilon) = \sum_{l=2}^\infty \varepsilon^{2l} {\mathcal M}_{2l}(\tau_0,\varepsilon) \] is the so called Melnikov function and ${\mathcal M}_{2l}$ contain the main terms of the total variation of $H_D$; their computation is similar to that of ${\mathcal M}_\nu$ above. In other words, the series contains the whole Hamiltonian $H_\varepsilon$ with $x$, $y$ replaced by their values in (\ref{xysol}) while ${\mathcal R}$ corresponds to the contribution to the separation provided by the higher order terms of the orbits on the stable and unstable manifolds given in (\ref{paramet}). After changing from $\tau_0$ to $s_0$ the infinite series can be interpreted as a Fourier series in $s_0$ whose coefficients are given as asymptotic expressions in terms of $\varepsilon$. In the Appendix we will provide the leading terms of the series as well as the corresponding estimates for the higher orders. Regarding $\mathcal R$ we shall prove that it is smaller than the dominant terms of ${\mathcal M}$ at least in regions of the phase space containing portions of the stable and unstable manifolds of $\gamma$ big enough where transversality can be checked, as it will be detailed in the Appendix.

The consequence of the existence of transversal homoclinic manifolds is the appearance of a Smale's horseshoe, and thence the occurrence of chaotic motion of the infinitesimal particle. This includes the existence of oscillatory motion, see \cite{moser,gh}.
 
In order to apply the results in the previous paragraphs we need to make some arrangements to Hamiltonian (\ref{hamm3}). Up to terms of order $\varepsilon^9$, it reads as
\begin{eqnarray}\label{HP4}
H_\varepsilon(r, \theta, R, \Theta, t) &\hspace*{-0.2cm}=\hspace*{-0.2cm}& \displaystyle \varepsilon^3 H_3 + \varepsilon^7 H_7 + \varepsilon^9 H_9 + O(\varepsilon^{11}) \nonumber\\[1ex]
&\hspace*{-0.2cm}=\hspace*{-0.2cm}& \displaystyle 
\varepsilon^3 \left(\frac{1}{2} \left( R^2+\frac{\Theta^2}{r^2}\right)-\frac{1}{r}\right) \nonumber \\
&&\displaystyle -\frac{\varepsilon^7}{4r^3}\left( c_1 + c_2 \cos 2(t-\theta) + c_3 \sin 2(t-\theta)\right) \\
&&\displaystyle -\frac{\varepsilon^9}{8r^4}\left(  d_1 \cos (t-\theta) + d_2 \sin (t-\theta) + d_3 \cos 3(t-\theta) \right. \nonumber \\[1ex]
&& \displaystyle \left.  \hspace*{1.1cm} + d_4 \sin 3(t-\theta)\right) \nonumber \\[1ex] \displaystyle && +O(\varepsilon^{11}),\nonumber
\end{eqnarray}
where $c_1$, $c_2$ and $c_3$ are the following constant terms, that depend only on $a_k$ and $m_k$:
\begin{equation} \label{c}
\begin{array}{cc} 
c_1 = \displaystyle \sum_{k=1}^{N-1} m_k (a_{k1}^2+a_{k2}^2), \qquad
c_2 = \displaystyle3 \sum_{k=1}^{N-1} m_k (a_{k1}^2-a_{k2}^2), \\[2ex]
c_3 = \displaystyle -6 \sum_{k=1}^{N-1} m_k a_{k1} a_{k2},
\end{array}
\end{equation}
and the $d_j$ are given by
\begin{equation}
\label{ds}
\hspace*{-0.25cm}\begin{array}{ll}
d_1 = \displaystyle 3 \sum_{k=1}^{N-1} m_k a_{k1} (a_{k1}^2+a_{k2}^2), &\,\,
d_2 = \displaystyle -3 \sum_{k=1}^{N-1} m_k a_{k2} (a_{k1}^2+a_{k2}^2), \\[2.5ex]
d_3 = \displaystyle 5 \sum_{k=1}^{N-1} m_k a_{k1} (a_{k1}^2-3a_{k2}^2), &\,\,
d_4 = \displaystyle -5 \sum_{k=1}^{N-1} m_k a_{k2} (3a_{k1}^2-a_{k2}^2).
\end{array}
\end{equation}
Higher order terms appearing in $O(\varepsilon^{11})$ should also be taken into consideration as we shall realize in the proof of the main result of this section.

The next step consists in applying McGehee's transformation to the equations of motion associated to (\ref{HP4}). After replacing $t-\theta$ by $s$, one gets
\begin{equation}
\label{EM4}
\begin{array}{rcl}
\dot{x} &\hspace*{-0.2cm}=\hspace*{-0.2cm}& \mbox{$\frac{1}{\sqrt{2}}$} \varepsilon^3 x^3 y, \\[1.2ex]
\dot{y} &\hspace*{-0.2cm}=\hspace*{-0.2cm}& \mbox{$\frac{1}{\sqrt{2}}$} \varepsilon^3 (1-\Theta^2 x^2) x^4 + \mbox{$\frac{3}{4\sqrt{2}}$} \varepsilon^7 \left(c_1 + c_2 \cos 2 s+ c_3 \sin 2s\right) x^8\\[1.2ex] && + \frac{1}{2 \sqrt{2}} \varepsilon^9 \left( d_1 \cos s + d_2 \sin s + d_3 \cos 3s + d_4 \sin 3s \right) x^{10}\\[1.2ex] && + O(\varepsilon^{11}), \\[1.2ex]
\dot{s} &\hspace*{-0.2cm}=\hspace*{-0.2cm}& 1 - \varepsilon^3 \Theta x^4,  \\[1.2ex]
\dot{\Theta} &\hspace*{-0.2cm}=\hspace*{-0.2cm}& -\mbox{$\frac{1}{2}$} \varepsilon^7\left( c_3 \cos 2 s - c_2 \sin 2s \right) x^6\\[1.2ex] && +\frac{1}{8} \varepsilon^9 \left( d_1 \sin s - d_2 \cos s + 3 d_3 \sin 3s - 3 d_4 \cos 3s \right) x^8\\[1.2ex] &&+O(\varepsilon^{11}),\end{array}\end{equation}
which corresponds to Hamiltonian equations (\ref{oso}).

With respect to the new time $\tau$, Eq. (\ref{EM4}) gets transformed into
\begin{equation}
\label{EM44}
\begin{array}{rcl}
x' &\hspace*{-0.2cm}=\hspace*{-0.2cm}& y, \\[1.2ex]
y' &\hspace*{-0.2cm}=\hspace*{-0.2cm}& (1-\Theta^2 x^2) x + \mbox{$\frac{3}{4}$} \varepsilon^4 \left(c_1 + c_2 \cos 2 s+ c_3 \sin 2s\right) x^5\\[1.2ex]&& +\frac{1}{2} \varepsilon^6 \left( d_1 \cos s + d_2 \sin s + d_3 \cos 3s + d_4 \sin 3s \right) x^7 \\[1.2ex] &&+ O(\varepsilon^8), \\[1.2ex]
s' &\hspace*{-0.2cm}=\hspace*{-0.2cm}& \sqrt{2} (\varepsilon^{-3} - \Theta x^4) x^{-3}, \\[1.2ex]
\Theta' &\hspace*{-0.2cm}=\hspace*{-0.2cm}& -\mbox{$\frac{1}{\sqrt{2}}$}\varepsilon^4 \left( c_3 \cos 2 s - c_2 \sin 2s \right) x^3\\[1.2ex] && +\frac{1}{4 \sqrt{2}} \varepsilon^6 \left( d_1 \sin s - d_2 \cos s + 3 d_3 \sin 3s - 3 d_4 \cos 3s \right) x^5 \\[1.2ex] && + O(\varepsilon^8).
\end{array}\end{equation}
To obtain the Melnikov function we use the corresponding integral of the main term given by $H_D$ in (\ref{ham_e0}), 
and compute the total derivative, assuming that $\Theta_0$ in (\ref{ham_e0}) is considered as $\Theta$, arriving at
\[ \frac{d H_D}{d \tau} = \frac{\partial H_D}{\partial x} x' + \frac{\partial H_D}{\partial y} y' + \frac{\partial H_D}{\partial \Theta} \Theta',
\]
where $x'$, $y'$, $\Theta'$ are given in (\ref{EM44}). In this expression $x$ and $y$ are replaced by their explicit values obtained for the unperturbed problem and that were given in (\ref{xysol}). 

The next step consists in solving the differential equation for $s'$ in (\ref{EM44}) using the fact that $\Theta$ is assumed to be constant, i.e. $\Theta=\Theta_0$, and $x(\tau)$ is taken from the solution (\ref{xysol}). The corresponding equation becomes
\[ s'(\tau) = \pm \frac{1}{2} \varepsilon^{-3} \Theta_0^3 \cosh^3 \tau \mp 2 \sech \tau. \]
We set $s(0) = s_0$  and the solution yields
\begin{equation}
s(\tau) = s_0 \mp 4 \arctan (\tanh( \tau/2 )) \pm \frac{1}{24} \varepsilon^{-3} \Theta_0^3 \left( 9 \sinh \tau + \sinh 3\tau \right).\end{equation}
The upper signs in $s'$, $s$ and the formulae that follow are used when $\Theta_0>0$ and the lower ones when $\Theta_0<0$.

The leading term of the resulting expression of the total derivative of $H_D$ with respect to $\tau$ becomes
\[ \varepsilon^4 M_4 = \mp \frac{3 \varepsilon^4 \sech^{10}\tau \tanh \tau}{4 \Theta_0^6} \left( 8 c_1 \cosh^4 \tau+ A \cos 2s_0+ B \sin 2 s_0\right), \]
where 
\[ \begin{array}{rcl}
A &\hspace*{-0.2cm}=\hspace*{-0.2cm}& (c_2 c_\Theta + c_3 s_\Theta) (35 - 28 \cosh 2 \tau + \cosh 4 \tau) \\[1ex] && - 
8 (c_3 c_\Theta - c_2 s_\Theta) (7 \sinh \tau - \sinh 3 \tau) \\[1ex] &&
-\mbox{$\frac{2}{3}$} \csch \tau \left( 8 (c_2 c_\Theta + c_3 s_\Theta) (7 \sinh \tau - \sinh 3 \tau) \right. \\[1ex] && \hspace*{1.83cm} \left. + (c_3 c_\Theta - c_2 s_\Theta) (35 - 28 \cosh 2 \tau + \cosh 4 \tau) \right), \\[1.5ex]
B &\hspace*{-0.2cm}=\hspace*{-0.2cm}&  (c_3 c_\Theta - c_2 s_\Theta) (35 - 28 \cosh 2 \tau + \cosh 4 \tau) \\[1ex] && + 
8 (c_2 c_\Theta + c_3 s_\Theta) (7 \sinh \tau - \sinh 3 \tau)\\[1ex] &&
- \mbox{$\frac{2}{3}$} \csch \tau \left( (c_2 c_\Theta + c_3 s_\Theta) (35 - 28 \cosh 2 \tau + \cosh 4 \tau)  \right. \\[1ex] && \hspace*{1.83cm} \left.  - 
8 (c_3 c_\Theta - c_2 s_\Theta) (7 \sinh \tau - \sinh 3 \tau) \right), \end{array} \]
and
\[ \begin{array}{c}
c_\Theta = \cos\left(\mbox{$\Theta_0^3 \over 12 \varepsilon^3$} (9 \sinh \tau + \sinh 3 \tau)\right), \\[1.4ex]
s_\Theta = \sin\left(\mbox{$\Theta_0^3 \over 12 \varepsilon^3$} (9 \sinh \tau + \sinh 3 \tau)\right).\end{array} \]

Proceeding as in the previous paragraphs, the total derivative of $H_D$ with respect to $\tau$ corresponding to the terms of order six in $\varepsilon$ is
\[ \begin{array}{rcl}
\varepsilon^6 M_6 &\hspace*{-0.2cm}=\hspace*{-0.2cm}& \displaystyle \mp \frac{\varepsilon^6 \sech^{14}\tau \tanh \tau}{12 \Theta_0^8} \left( C \sin s_0+ D \cos s_0 + E \sin 3 s_0 \right.  \\[0.7ex]  && \left. \hspace*{3.5cm} + F \cos 3 s_0 \right), \end{array}\]
where 
\[ \hspace*{-0.1cm}\begin{array}{rcl}
C &\hspace*{-0.2cm}=\hspace*{-0.2cm}& 16\big( (d_2 \bar{c}_{\Theta} + d_1 \bar{s}_{\Theta}) (3-\cosh 2 \tau) - 4(d_1 \bar{c}_{\Theta} - d_2 \bar{s}_{\Theta}) \sinh \tau\big) \cosh^4 \tau \\[1ex]
&& + 12 \cosh^3 \tau \big(4 (d_2 \bar{c}_{\Theta} + d_1 \bar{s}_{\Theta}) \cosh \tau  \\[1ex] &&  \hspace*{2.2cm}- (d_1 \bar{c}_{\Theta} - d_2 \bar{s}_{\Theta}) (\cosh 2 \tau - 3) \coth \tau \big), \\[1.5ex]
D &\hspace*{-0.2cm}=\hspace*{-0.2cm}& 16\big( (d_1 \bar{c}_{\Theta} - d_2 \bar{s}_{\Theta}) (3-\cosh 2 \tau) + 4(d_2 \bar{c}_{\Theta} + d_1 \bar{s}_{\Theta}) \sinh \tau\big) \cosh^4 \tau \\[1ex]
&& + 12 \cosh^3 \tau \big(4 (d_1 \bar{c}_{\Theta} - d_2 \bar{s}_{\Theta}) \cosh \tau  \\[1ex] &&  \hspace*{2.2cm} + (d_2 \bar{c}_{\Theta} + d_1 \bar{s}_{\Theta}) (\cosh 2 \tau - 3) \coth \tau \big), \\[1.5ex]
E &\hspace*{-0.2cm}=\hspace*{-0.2cm}& 
( d_4 \tilde{c}_{\Theta}+ d_3 \tilde{s}_{\Theta}) ( 462 - 495 \cosh 2 \tau + 66 \cosh 4 \tau - \cosh 6 \tau) \\[1ex] && - 
4 (d_3 \tilde{c}_{\Theta} - d_4 \tilde{s}_{\Theta}) (198 \sinh \tau - 55 \sinh 3 \tau + 3 \sinh 5 \tau) \\[1ex]
&& + \mbox{$\frac{9}{4}$} \csch \tau \big ( (d_3 \tilde{c}_{\Theta} - d_4 \tilde{s}_{\Theta} ) ( 462 - 495 \cosh 2 \tau + 66 \cosh 4 \tau \\[1ex] && \hspace*{4.45cm}- \cosh 6 \tau) \\[1ex] && \hspace*{1.76cm} + 
   4 (d_4 \tilde{c}_{\Theta} + d_3 \tilde{s}_{\Theta}) (198 \sinh\tau - 55 \sinh 3 \tau + 
      3 \sinh 5 \tau) \big), \\[1.5ex]
F &\hspace*{-0.2cm}=\hspace*{-0.2cm}& 
( d_3 \tilde{c}_{\Theta} - d_4 \tilde{s}_{\Theta}) ( 462 - 495 \cosh 2 \tau + 66 \cosh 4 \tau - \cosh 6 \tau) \\[1ex] && + 
4 (d_4 \tilde{c}_{\Theta} + d_3 \tilde{s}_{\Theta}) (198 \sinh \tau - 55 \sinh 3 \tau + 3 \sinh 5 \tau)\\[1ex]
&& + \mbox{$\frac{9}{4}$} \csch \tau \big ( (d_4 \tilde{c}_{\Theta} + d_3 \tilde{s}_{\Theta}) ( -462 + 495 \cosh 2 \tau - 66 \cosh 4 \tau \\[1ex] && \hspace*{4.45cm} + \cosh 6 \tau) \\[1ex] && \hspace*{1.76cm} + 
   4 (d_3 \tilde{c}_{\Theta} - d_4 \tilde{s}_{\Theta}) (198 \sinh\tau - 55 \sinh 3 \tau + 
      3 \sinh 5 \tau) \big), 
\end{array} \]
and
\[
\begin{array}{l}
\bar{c}_\Theta = \cos\left(\mbox{$\Theta_0^3 \over 24 \varepsilon^3$} (9 \sinh \tau + \sinh 3 \tau)\right),  \\[1.4ex]
\bar{s}_\Theta = \sin\left(\mbox{$\Theta_0^3 \over 24 \varepsilon^3$} (9 \sinh \tau + \sinh 3 \tau)\right),\\[1.4ex]
 \tilde{c}_\Theta = \cos\left(\mbox{$\Theta_0^3 \over 8 \varepsilon^3$} (9 \sinh \tau + \sinh 3 \tau)\right),  \\[1.4ex]
\tilde{s}_\Theta = \sin\left(\mbox{$\Theta_0^3 \over 8 \varepsilon^3$} (9 \sinh \tau + \sinh 3 \tau)\right).\end{array}\]

We observe that the total derivative of the Duffing Hamiltonian has become
\[ \frac{d H_D}{d \tau} = \varepsilon^4 M_4 + \varepsilon^6 M_6 + O(\varepsilon^8). \]

At this point we need to compute the integrals of $M_4$ and $M_6$ for $\tau$ between $-\infty$ and $\infty$. After performing the change of variable $z = \sinh \tau$ (note that $z' = \cosh \tau > 0$, hence the change is well-defined), simplifying the resulting expressions and dropping the odd terms with respect to $z$ as their respective integrals are zero, we arrive at
\begin{equation}\label{Mel4}
 {\mathcal M}_4(s_0;\Theta_0,\varepsilon) = \int_{-\infty}^{\infty} \tilde{M}_4 \, d z = \pm \frac{2}{\Theta_0^6} {\mathcal F}_4(\Theta_0,\varepsilon) (c_2 \sin 2s_0 - c_3 \cos 2s_0), \end{equation}
where $\tilde{M}_4(z)=M_4(\tau(z))/\sqrt{z^2+1}$ and
\begin{equation} \label{FF4}\hspace*{-0.32cm}\begin{array}{rcl}
{\mathcal F}_4(\Theta_0,\varepsilon)  &\hspace*{-0.2cm}=\hspace*{-0.2cm}& \displaystyle \int_{-\infty}^{\infty} \frac{1}{(z^2+1)^6}\left(2 (7 z^4 -12 z^2 + 1) \cos (\mbox{$\Theta_0^3 \over 3 \varepsilon^3$} z (z^2+3)) \right.\\ &&  \displaystyle \left. \hspace*{1.45cm} + z ( 3z^4 - 26 z^2 + 11 ) \sin( \mbox{$\Theta_0^3\over 3 \varepsilon^3$} z (z^2+3))\right) dz. \end{array}\end{equation}
We have passed from $\tau_0$ to $s_0$ as the resulting expressions are much easier expressed in terms of them. From now the Melnikov function $\mathcal M$ will be written in terms of $s_0$.

In order to obtain the integral of $M_6$ with respect to $\tau$ we apply as before the change $z = \sinh \tau$, discard those terms with zero
integral and simplify, ending up with
\begin{equation}\label{Mel6}
\begin{array}{rcl}
{\mathcal M}_6(s_0;\Theta_0, \varepsilon) &\hspace*{-0.2cm}=\hspace*{-0.2cm}& \displaystyle \int_{-\infty}^{\infty} \tilde{M}_6 \, d z  \\[2ex]
\displaystyle &\hspace*{-0.2cm}=\hspace*{-0.2cm}&  \displaystyle\pm \frac{2}{\Theta_0^8} \big( {\mathcal F}_{6,1}(\Theta_0, \varepsilon) (d_2 \cos s_0 - d_1 \sin s_0)\\[1.5ex]&& \hspace*{1cm}\displaystyle + {\mathcal F}_{6,2}(\Theta_0, \varepsilon) (d_4 \cos 3 s_0 - d_3 \sin 3 s_0) \big), \end{array}\end{equation}
with $\tilde{M}_6(z)=M_6(\tau(z))/\sqrt{z^2+1}$ and
\begin{equation} \label{FF612}\begin{array}{rcl}
{\mathcal F}_{6,1}(\Theta_0, \varepsilon) &\hspace*{-0.3cm}=\hspace*{-0.3cm}& \displaystyle \int_{-\infty}^{\infty} \frac{1}{(z^2+1)^6}\left( (9z^2-1) \cos ( \mbox{$\Theta_0^3  \over 6 \varepsilon^3$} z (z^2+3)) \right.\\ &&  \displaystyle \left. \hspace*{2.23cm}+ 2z (2z^2-3) \sin ( \mbox{$\Theta_0^3  \over 6 \varepsilon^3$} z (z^2+3))\right) dz, \\[5ex] 
 {\mathcal F}_{6,2}(\Theta_0, \varepsilon) &\hspace*{-0.2cm}=\hspace*{-0.2cm}& \displaystyle \int_{-\infty}^{\infty} \frac{1}{(z^2+1)^8}
 \left( ( 27 z^6 - 125 z^4 + 69 z^2 -3 ) \times \right. \\[-0.2ex] && \left. \hspace*{2.7cm} \times \cos ( \mbox{$\Theta_0^3  \over 2 \varepsilon^3$} z (z^2+3)) \right.\\[0.5ex] &&  \displaystyle \left. \hspace*{-0.58cm} +
  2 z (2 z^6 - 39 z^4 + 60 z^2 - 11 ) \sin ( \mbox{$\Theta_0^3  \over 2 \varepsilon^3$} z (z^2+3))\right) dz.\end{array}\end{equation}

Higher order terms, say $M_8$, $M_{10}$, etc. are obtained in a similar way as $M_4$ and $M_6$. An important feature is that $\varepsilon^4 M_4$ is the dominant term, the second one is $\varepsilon^6 M_6$ and so on. However, the ordering of the functions ${\mathcal M}_{2k}$ is different, due to the contribution of $\varepsilon$ inside the arguments of the trigonometric functions. Indeed, this ordering depends now on the harmonics $\cos k s_0$, $\sin k s_0$, in such a way that the dominant terms are those corresponding to $\cos s_0$, $\sin s_0$, then those of $\cos 2 s_0$, $\sin 2s_0$ and so on
as we shall see in the Appendix. As a consequence, the terms in ${\mathcal M}_6$ containing $\cos s_0$, $\sin s_0$ represent the leading terms in the approximation in $\varepsilon$ of the Melnikov function of the restricted $N$-body problem (\ref{h1}) related to parabolic motions of the infinitesimal particle near infinity. 

It is straightforward to check that the relevant factor of the terms related to $\cos s_0$, $\sin s_0$ always arise through expressions of the form $\varepsilon^{2(2l+1)} {\mathcal M}_{2(2l+1)}$, $l \ge 2$ and it is
\[ d_2^{(l)} \cos s_0 - d_1^{(l)} \sin s_0 \quad \mbox{with} \,\,\, 
\begin{cases} d_1^{(l)} = \sum_{j=1}^{N-1} m_j a_{j1} (a_{j1}^2+a_{j2}^2)^l, \\[1ex] d_2^{(l)} = -\sum_{j=1}^{N-1} m_j a_{j2} (a_{j1}^2+a_{j2}^2)^l.\end{cases} \]  We state the following result.

\begin{theorem}\label{TeoM4}
There exists $\varepsilon_0 > 0$ such that for any $\varepsilon$ with $\varepsilon_0 \le \varepsilon \ll 1$ the stable and unstable manifolds of the periodic orbit $\gamma$ related to Hamiltonian (6) intersect transversally if one of the following situations is given: i) $d_1$ or $d_2$ do not vanish; ii) $d_1 = d_2 = 0$ and there exists $l \ge 2$ such that $d_1^{(l)}$ or $d_2^{(l)}$ do not vanish; iii) all the previous terms are zero and $c_2$ or $c_3$ do not vanish; iv) all the preceding terms are zero and there is a non-null constant term accompanying $\cos k s_0$ or $\sin k s_0$ for some $k \ge 2$.
\end{theorem}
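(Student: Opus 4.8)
The plan is to run a Melnikov-type argument: reduce the transversality of $W^s_\varepsilon(\gamma)$ and $W^u_\varepsilon(\gamma)$ to the existence of a simple zero of the splitting function \eqref{split} in the section parameter $s_0$, identify its dominant Fourier harmonic in each of the four cases, and check that this harmonic possesses a simple zero which survives the addition of all the lower-order contributions. By \eqref{split}, a point of $W^s_\varepsilon(\gamma)\cap W^u_\varepsilon(\gamma)$ lying on the section $\{s=s_0\}$ is a zero $s_0^\ast$ of
\[
\Delta(s_0,\varepsilon):=H_D\big(\varphi_s(\tau_0,x_\varepsilon^s,y_\varepsilon^s)\big)-H_D\big(\varphi_u(\tau_0,x_\varepsilon^u,y_\varepsilon^u)\big)=\mathcal{M}(s_0,\varepsilon)+\mathcal{R}(s_0,\varepsilon),
\]
with $\tau_0=\tau(s_0)$; and, since $H_D$ is constant along the unperturbed homoclinic \eqref{xysol} and $\nabla H_D$ spans the direction normal to its level sets singled out above, the intersection at $s_0^\ast$ is transversal precisely when $\partial_{s_0}\Delta(s_0^\ast,\varepsilon)\neq 0$ (the usual Melnikov criterion, cf.\ \cite{HolMar1,HolMar2,gh,Robin,Xia92}). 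Thus it is enough to exhibit a term of $\mathcal{M}$ that, together with its $s_0$-derivative, dominates the sum of all the remaining terms of $\mathcal{M}$ and of $\mathcal{R}$ on the portion of the manifolds where transversality is to be checked, and which on its own has a simple zero.

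First I would single out that dominant term case by case, using the block structure in \eqref{Mel4}--\eqref{FF612} and the remark preceding the statement: after integration the harmonic $\cos ks_0,\sin ks_0$ carries an exponentially small factor of order $e^{-k\Theta_0^3/(3\varepsilon^3)}$ (times an algebraic factor in $\varepsilon$), so that smaller $k$ is the less suppressed. In case i) the dominant term is $\pm(2/\Theta_0^8)\,\mathcal{F}_{6,1}(\Theta_0,\varepsilon)\,(d_2\cos s_0-d_1\sin s_0)$, coming from $\mathcal{M}_6$; in case ii), where $d_1=d_2=0$ annihilates that block, it is the first surviving contribution $\pm\,\kappa\,\mathcal{F}(\Theta_0,\varepsilon)\,(d_2^{(l)}\cos s_0-d_1^{(l)}\sin s_0)$ arising from $\varepsilon^{2(2l+1)}\mathcal{M}_{2(2l+1)}$ with $l\ge 2$ minimal; in case iii), where the entire $k=1$ block vanishes, it is $\pm(2/\Theta_0^6)\,\mathcal{F}_4(\Theta_0,\varepsilon)\,(c_2\sin 2s_0-c_3\cos 2s_0)$ from $\mathcal{M}_4$; and in case iv) it is the nonvanishing $\cos ks_0/\sin ks_0$ block of least $k\ge 2$ (still beating every higher harmonic because $e^{-2\Theta_0^3/(3\varepsilon^3)}\gg e^{-3\Theta_0^3/(3\varepsilon^3)}\gg\cdots$). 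In each case the dominant term has the form $\alpha(\Theta_0,\varepsilon)\cos ks_0+\beta(\Theta_0,\varepsilon)\sin ks_0$ with $(\alpha,\beta)\neq(0,0)$: the configuration coefficients entering $(\alpha,\beta)$ do not all vanish by the hypothesis of the case, and the relevant oscillatory integral $\mathcal{F}$ (one of those in \eqref{FF4},\eqref{FF612}, or its higher-order analogue) is nonzero, with a known exponentially small leading behaviour — this is where I invoke the asymptotic evaluation of integrals of the type $\int_{-\infty}^{\infty}R(z)\,e^{\mathrm{i}\lambda z(z^2+3)}\,dz$, $\lambda\sim\Theta_0^3/\varepsilon^3$, taken from \cite{LlibreSimo,Regina,RegSim} and assembled in the Appendix. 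A term $\alpha\cos ks_0+\beta\sin ks_0=\sqrt{\alpha^2+\beta^2}\,\cos(ks_0-\phi)$ has $2k$ simple zeros on $\mathbb{S}^1$, the $s_0$-derivative at each having absolute value $k\sqrt{\alpha^2+\beta^2}>0$.

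Next I would close the perturbation. Fix one of those simple zeros $\bar s_0$ of the dominant term $\mathcal{D}(s_0,\varepsilon)=\alpha\cos ks_0+\beta\sin ks_0$ and write $\Delta=\mathcal{D}+\mathcal{E}$, where $\mathcal{E}$ collects every subdominant term of $\mathcal{M}$ together with the whole remainder $\mathcal{R}$. The estimates of the Appendix give $\sup_{s_0}\big(|\mathcal{E}(s_0,\varepsilon)|+|\partial_{s_0}\mathcal{E}(s_0,\varepsilon)|\big)=o\big(\sqrt{\alpha^2+\beta^2}\big)$ on the relevant pieces of $W^s_\varepsilon(\gamma)$, $W^u_\varepsilon(\gamma)$ and for $\varepsilon$ in the admissible window $\varepsilon_0\le\varepsilon\ll 1$. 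Hence $\Delta$ changes sign across $\bar s_0$ while $|\partial_{s_0}\Delta|\ge\tfrac{1}{2}k\sqrt{\alpha^2+\beta^2}>0$ on a neighbourhood of it, so the Intermediate Value Theorem (or the Implicit Function Theorem applied to $\Delta=0$) yields a zero $s_0^\ast$ near $\bar s_0$ with $\partial_{s_0}\Delta(s_0^\ast,\varepsilon)\neq 0$, i.e.\ a transversal homoclinic point of the parabolic manifolds. Transversal homoclinic intersections then produce a Smale horseshoe for a suitable iterate of the Poincar\'e map \cite{gh,Robin}, hence chaotic dynamics and, in particular, oscillatory orbits \cite{moser}, which is the asserted conclusion.

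The step I expect to be genuinely delicate — and the reason the window $\varepsilon_0\le\varepsilon\ll1$, rather than $\varepsilon\to0$, appears — is the control of $\mathcal{E}$. One must establish that the series $\mathcal{M}=\sum_{l\ge 2}\varepsilon^{2l}\mathcal{M}_{2l}$ is genuinely reordered by the exponentially small oscillatory integrals (within each harmonic a single term is leading, and across harmonics the order is $e^{-\Theta_0^3/(3\varepsilon^3)}\gg e^{-2\Theta_0^3/(3\varepsilon^3)}\gg\cdots$), that the leading coefficient of the dominant harmonic is indeed nonzero with the stated asymptotics, and that the remainder $\mathcal{R}$ arising from the higher-order corrections to the parametrizations \eqref{paramet} is smaller still on a portion of the manifolds large enough to locate the zero. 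These are precisely the asymptotic lemmas worked out in the Appendix; granted them, the transversality argument above is routine.
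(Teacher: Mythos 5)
Your proposal is correct and follows essentially the same route as the paper: reduce transversality to simple zeros of the splitting function, use the exponentially small factors $e^{-k\Theta_0^3/(3\varepsilon^3)}$ to reorder the Melnikov series by harmonics so that in each case (i)--(iv) a single block $\alpha\cos ks_0+\beta\sin ks_0$ dominates (coming from ${\mathcal M}_6$, $\varepsilon^{2(2l+1)}{\mathcal M}_{2(2l+1)}$, ${\mathcal M}_4$, or the first surviving harmonic), and control the subdominant terms and the remainder ${\mathcal R}$ via the Sanders-type and oscillatory-integral estimates of the Appendix. Your write-up merely makes the final perturbation step (simple zero of the dominant harmonic persisting under the small error) more explicit than the paper does, which is a presentational rather than substantive difference.
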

\begin{proof}
In the Appendix, using an argument of Sanders \cite{Sanders} we prove that ${\mathcal R}$ can be maintained small enough in regions of the phase space that include parts of the stable and unstable manifolds of $\gamma$ such that transversality is satisfied. More precisely, we can control the size of
${\mathcal R}$ for all $|\tau| \ge K > 0$ and $K$ a constant independent of $\varepsilon$, keeping it smaller than the dominant terms of $\mathcal M$. Then we prove that the leading terms of the Melnikov function are those factorizing $\cos s_0$, $\sin s_0$, then those factorizing $\cos 2s_0$, $\sin 2s_0$ and so on. Therefore, the most influential term in the Melnikov function is the one appearing in $\varepsilon^6 {\mathcal M}_6$, that is $(d_2 \cos s_0 - d_1 \sin s_0) {\mathcal F}_{6, 1}(\Theta_0, \varepsilon)$. As ${\mathcal F}_{6, 1}$ does not vanish for $\Theta_0 \neq 0$ and $\varepsilon \ge \varepsilon_0$ small enough, we focus on the analysis of the possible zeros of the factor $f(s_0)=d_2 \cos s_0 - d_1 \sin s_0$.  Multiple roots of $f(s_0)=0$ occur only when $d_1 = d_2 = 0$. In this case we consider the next most important term of the Melnikov function, that is, $\varepsilon^{10} {\mathcal M}_{10}$. Its corresponding constant terms are $d_1^{(2)}$, $d_2^{(2)}$ and they always appear in the form $d_2^{(2)} \cos s_0 - d_1^{(2)} \sin s_0$, and similarly for $l > 2$. Thus, it is enough that there is a non-null coefficient $d_j^{(l)}$, $j = 1, 2$, $l \ge 2$, to establish the transversality condition. When all terms related to $\cos s_0$, $\sin s_0$ vanish, we take into account the next main terms, that are those of $\varepsilon^4 {\mathcal M}_4$. They appear in the form $c_3 \cos 2s_0 - c_2 \sin 2s_0$, thus it is enough that $c_2$ or $c_3$ do not vanish to get transversality. When $c_2 = c_3 = 0$ we need to consider the next terms related to $\cos 2s_0$, $\sin 2s_0$, and these terms appear in $\varepsilon^8 {\mathcal M}_8$. We continue the procedure until we identify a non-null constant term accompanying $\cos k s_0$ or $\sin k s_0$, concluding the transversality of the manifolds in that case.
\end{proof}

In Section \ref{sec5}, more specifically when we will deal with the circular restricted 3-body problem and with the polygonal restricted $N$-body problem, we shall see how higher-order terms of the Melnikov function are needed to establish the transversality of the stable and unstable manifolds of the parabolic periodic orbits.

\section{Applications}\label{sec5}

\subsection{Restricted circular 3-body problem}
We study the planar circular restricted 3-body problem. In this example the position and mass parameters can be chosen as
\[a_{11}=1-\mu,\quad
a_{12}=0,\quad
a_{21}=-\mu,\quad
a_{22}=0,\quad
m_1=\mu,\quad
m_2=1-\mu,
\]
where $0<\mu \le \frac{1}{2}$.

With these values, the perturbation parameters appearing in (\ref{ds}) read as:
\[ d_1 = 3 \mu (1-\mu) (1-2\mu),\quad d_2=0.\]
Then, according to Theorem \ref{TeoM4}, when $\mu < 1/2$ for $\varepsilon >0$ small enough the stable and unstable manifolds of the periodic orbit $\gamma$ related to Hamiltonian (\ref{pez}) intersect transversally.

For $\mu = 1/2$ we have to consider higher-order terms, starting with the harmonics $\cos s_0$, $\sin s_0$. In this case we note that the parameters $d_1^{(l)}$, $d_2^{(l)}$ vanish for all $l \ge 2$, then we calculate $c_2$ and $c_3$, arriving at \[ c_2 = \frac{3}{4}, \quad c_3 = 0. \]

Thus, Theorem \ref{TeoM4} applies and the stable and unstable manifolds of the periodic orbit $\gamma$ intersect transversally.

With the approach described above we have completed the analysis of the parabolic orbits for the circular restricted 3-body problem for any $\mu \in (0, 1/2]$. This problem is also treated in \cite{famous}.

\subsection{Equilateral restricted 4-body problem}
In this example the three massive particles form an equilateral triangle, therefore a central configuration, thus (\ref{cc}) is satisfied. The parameter values are:
\[
\begin{array}{ll}
a_{11}=\frac{1}{2}(1-m_1-2m_2), &\,\,
a_{12}=\frac{\sqrt{3}}{2}(1-m_1), \\[1ex]
a_{21}=\frac{1}{2}(2-m_1-2m_2), &\,\,
a_{22}=-\frac{\sqrt{3}}{2}m_1, \\[1ex]
a_{31}=-\frac{1}{2}(m_1+2m_2), &\,\,
a_{32}=-\frac{\sqrt{3}}{2}m_1,\\[1ex]
m_3=1-m_1-m_2,
\end{array}
\]
where $m_1,m_2>0$ and $m_1+m_2<1$.

Then, the coefficients $d_i$ of (\ref{ds}) take the following values:
\[
\begin{array}{l}
d_1=\mbox{$\frac{3}{2}$}(m_1 + 2 m_2 -1)(2m_1^2 + 2 m_2^2 + 2m_1 m_2-m_1-2m_2),\\[1ex]
d_2=-\mbox{$\frac{3\sqrt{3}}{2}$} m_1 \left( 2m_1^2 + 2m_2^2 + 2m_1 m_2 -3m_1-2m_2+1\right),\\[1ex]
\end{array}
\]
Therefore the transversal intersection of the manifolds is established as Theorem \ref{TeoM4} applies, except for the case $m_1=m_2=1/3$, which is the only case for which $d_1=d_2=0$ in the allowed region of the mass parameters.

When $m_1=m_2=1/3$ we check that the coefficients $d_1^{(l)}$, $d_2^{(l)}$ vanish for all $l \ge 2$.
Moreover, $c_2 = c_3 = 0$ and realize that all terms accompanying $\cos 2s_0$, $\sin 2s_0$ vanish as well. Thus, we consider the leading terms of $\cos 3s_0$, $\sin 3s_0$ and get 
\[ d_3 = 0, \quad d_4 = \frac{5}{3\sqrt{3}}. \]

Thence Theorem \ref{TeoM4} is applied, accomplishing the intersection of the manifolds of the parabolic orbits. 

The case $m_1=m_2=1/3$ is also included as a particular situation of the polygonal restricted $N$-body problem that will be addressed in Subsection \ref{polyN}.

\subsection{Restricted rhomboidal  5-body problem}
We consider the case where masses $m_1$ to $m_4$ are equal by pairs and form a convex polygon, a rhombus, see \cite{MarMe} and references therein. The parameters that define the problem are as follows:
\[
\begin{array}{llll}
a_{11}=-x, \quad&
a_{12}=0, &   
\hspace*{-1cm}
a_{21}=0, \quad&
\,\,a_{22}=y, \\[0.8ex]
a_{31}=x, \quad&
a_{32}=0, & 
\hspace*{-1cm}
a_{41}=0, \quad&
\,\,a_{42}=-y,\\[0.8ex]
m_1=m_3=\mu, &
m_2=m_4=\frac{1}{2}-\mu,
\end{array}\]
where $0 < \mu < 1/2$ and $x, y > 0$. To get a central configuration the parameters $\mu$, $x$, $y$ must be related. For this purpose we impose that Eqs. (\ref{cc}) are satisfied. It is convenient to introduce two parameters $a,b>0$ such that
\[\begin{array}{l}
x=\displaystyle \frac{a}{2\sqrt{a^2 + b^2}} \left(\frac{64 a^3 b^3 - (a^2 + b^2)^3}
{ 16 a^3 b^3 -(a^3+b^3)(a^2+b^2)^{3/2} }\right)^{1/3},\\[2ex]
 y=\displaystyle \frac{b}{2\sqrt{a^2 + b^2}} \left(\frac{64 a^3 b^3 - (a^2 + b^2)^3}
{ 16 a^3 b^3 -(a^3+b^3)(a^2+b^2)^{3/2} }\right)^{1/3},\end{array}\]
then a central configuration occurs provided $\mu$ is taken as
\[ \mu = \frac{a^3 \left(8 b^3-(a^2 + b^2)^{3/2} \right)}{2\left(16 a^3 b^3-(a^3 + b^3) (a^2 + b^2)^{3/2}\right)}. \]
To ensure that $\mu \in (0, 1/2)$ we must restrict $a$, $b$ so that $0 < b < \sqrt{3} a < 3 b$.

In terms of $x$, $y$, the perturbation parameters in (\ref{ds}) are identically zero. Moreover the coefficients $d_1^{(l)}$, $d_2^{(l)}$ also vanish for all $l \ge 2$, thus we need to obtain $c_2$, $c_3$. We get
\[
\begin{array}{l}
c_2=-3y^2+6\mu (x^2+y^2),\\[1.1ex]
c_3=0.
\end{array}
\]
When $c_2$ is non-zero Theorem \ref{TeoM4} applies and we get the transversality of the manifolds. For  $c_2=0$ we should go to higher orders in $\varepsilon$. The coefficient $c_2$ vanishes in three cases: (i) If $a = b$, then $\mu=1/4$, which corresponds to the square configuration, i.e. the polygonal restricted 4-body problem, that will be treated in the next subsection; (ii) when $a = 1.32018439... b$; and (iii) the reverse case to (ii), i.e. $a = 0.75746994... b$. The last two  values come as the only (two) real roots of the 14-degree homogeneous polynomial equation given by
\[\begin{array}{l}
a^{14} + 2 a^{13} b + 6 a^{12} b^2 + 10 a^{11} b^3 + 17 a^{10} b^4 + 
22 a^9 b^5 - 36 a^8 b^6 - 100 a^7 b^7 \\[1ex] 
- 36 a^6 b^8 + 22 a^5 b^9 + 
17 a^4 b^{10} + 10 a^3 b^{11} + 6 a^2 b^{12} + 2 a b^{13} + b^{14} = 0.
\end{array}\]
According to Theorem \ref{TeoM4}, the next terms that have to be checked for the three cases pointed out above are the coefficients of $\cos 2s_0$, $\sin 2s_0$ that appear in the function $\varepsilon^8 {\mathcal M}_8$. Specifically, we have calculated
$\mp 10\varepsilon^8 \Theta_0^{-10}{\mathcal F}_{8,1}(\Theta_0,\varepsilon) (c_2^{(2)} \sin 2 s_0 - c_3^{(2)} \cos 2 s_0)$ where ${\mathcal F}_{8,1}$ is a function of order $\varepsilon^{-5/2} \exp(-2\Theta_0^3/(3\varepsilon^3))$ when $\Theta_0>0$ and of order $\varepsilon^{-1} \exp(2\Theta_0^3/(3\varepsilon^3))$ when $\Theta_0<0$. Besides $c_2^{(2)}$, $c_3^{(2)}$ are coefficients depending on the masses and positions. We get $c_3^{(2)}=0$ for the three cases, $c_2^{(2)}=0.20447308...$ for case (ii) and $c_2^{(2)}=-0.20447308...$ for case (iii). Thus we can conclude the transversality of manifolds. However in case (i) one has $c_2^{(2)}=0$. This is the case $a = b$, that corresponds to a particular situation of the polygonal restricted $N$-body problem dealt with in Subsection \ref{polyN}.

\subsection{Collinear restricted $N$-body problem}
In the following we present two examples where the $N-1$ massive particles are collinear in a planar central configuration.  

Note that in all collinear problems $a_{k2}=0$ for $k=1,\ldots,N-1$. Then, the parameter $c_3$ of (\ref{c}) always vanishes. 

\subsubsection{Collinear restricted $8$-body problem}
Let us consider a collinear configuration of seven bodies with equal masses that are placed in a symmetric configuration with respect to the origin on the horizontal axis. In order to calculate the positions we impose that the conditions (\ref{cc}) are satisfied and so, the seven bodies form a central configuration. Then, by means of the determination of the resultant of two polynomials we find that a collinear configuration occurs if the parameters are chosen as follows:
\[
\begin{array}{l}
a_{11}=-1.17858061..., \quad
a_{21}=-0.73861375..., \\[1ex]
a_{31}=-0.35910513..., \quad
a_{41}=0,\\[1ex]
a_{51}=-a_{31},\,\, \qquad
a_{61}=-a_{21},\,\  \qquad
a_{71}=-a_{11}, \\[1ex]
a_{k2}=0, \quad m_k=\frac{1}{7},\quad {\rm for}\; k=1,\ldots,7.
\end{array}
\]

The parameters $d_1$, $d_2$ of (\ref{ds}) vanish. Furthermore the coefficients associated to the higher order terms of the harmonics $\cos s_0$, $\sin s_0$, i.e. $d_1^{(l)}$, $d_2^{(l)}$ are all zero for all $l\ge 2$. 

Then, the perturbation parameters appearing in (\ref{c}) are:
\[
c_2=1.76876487... ,\quad
c_3=0.
\]
Thus the conditions of Theorem \ref{TeoM4} are satisfied. Thence, for $\varepsilon$ small enough with $\varepsilon \ge \varepsilon_0 > 0$, the stable and unstable manifolds of the periodic orbit $\gamma$ related to Hamiltonian (\ref{pez}) intersect transversally.  

\subsubsection{Collinear restricted $11$-body problem}
Here we consider ten particles in collinear configuration placed at equidistant positions. Then, we calculate the masses so as to obtain a central configuration. For that, we impose that Eqs. (\ref{cc}) are satisfied and solve the resulting system of linear equations to get
\[
\begin{array}{l}
a_{11}=-1.44194062...,\qquad
a_{21}=-1.12150937...,\\[1ex]
a_{31}=-0.80107812...,\qquad
a_{41}=-0.48064687...,\\[1ex]
a_{51}=-0.16021562...,\\[1ex]
a_{61}=-a_{51},\,\,
a_{71}=-a_{41},\,\,
a_{81}=-a_{31},\,\,
a_{91}=-a_{21},\,\,
a_{101}=-a_{11},\\[1ex]
a_{k2}=0\quad {\rm for}\; k=1,\ldots,10,\\[1ex]
m_1=m_{10}=0.05585772..., \qquad
m_2=m_9=0.08684056..., \\[1ex]
m_3=m_8=0.10794726..., \qquad
m_4=m_7=0.121390422...,\\[1ex]
m_5=m_6=0.12796403... . 
\end{array}
\]

Note that, although here we put an approximation of the values correct to eight decimal digits, we have obtained them using integer arithmetic.

As in the previous case, the parameters $d_1$, $d_2$ of (\ref{ds}) become zero while the coefficients  $d_1^{(l)}$, $d_2^{(l)}$ also vanish. Then we need to obtain the leading terms of the harmonics $\cos 2s_0$, $\sin 2s_0$.
We calculate the coefficients~(\ref{c}) and get their values also exactly, an approximation of them accurate up to eight decimal places  being:
\[
c_2=1.95579995...,\quad
c_3=0.
\]
Then, as $c_2 \neq 0$, Theorem \ref{TeoM4} is satisfied, concluding the transversality condition of the manifolds of $\gamma$.

\subsection{Polygonal restricted $N$-body problem}
\label{polyN}
In this case particles 1 to $N-1$ form a regular $(N-1)$-gon determined by the following constants:
\[a_{k1}={\rm Re}({\rm e}^{2\pi i \frac{k-1}{N-1}}), \qquad
a_{k2}={\rm Im}({\rm e}^{2\pi i \frac{k-1}{N-1}}), \qquad 
m_k=\frac{1}{N-1},
\]
for $k=1$ to $N-1$ where $N\ge 4$. 

We want to write down Hamiltonian (\ref{hamm3}) for the specific cases of the polygonal restricted problem.
After some manipulations and simplifications that include an induction over the integer $N$, we notice that
the Hamiltonian function can be written in terms of symplectic polar coordinates in a rather compact way by
\begin{equation}\label{polig}
\begin{array}{rcl}
     H_\varepsilon(r, \theta, R, \Theta,t) &\hspace*{-0.2cm}=\hspace*{-0.2cm}&
	\displaystyle \varepsilon^3 \left( \frac{1}{2} \left( R^2 +\frac{\Theta^2}{r^2} \right) - \frac{1}{r} \right) - 
	\sum_{j=1}^{2N-3} \frac{\varepsilon^{j+3}}{r^{j/2+1}} U_j \\[1ex]
	&& \displaystyle - 
	 \frac{\varepsilon^{2N+1}}{r^N} \Big(V_{N-1}
	+ W_{N-1} \cos (N-1) (t-\theta) \Big) \\[2ex] && +O(\varepsilon^{2N+2}),
	\end{array} \end{equation}
where
\[ \begin{array}{rcl}
U_j &\hspace*{-0.2cm}=\hspace*{-0.2cm}& \displaystyle \frac{\left(1+(-1)^j+2 \cos(j\pi/2)\right) (\Gamma(j/4+1/2))^2}{4\pi (\Gamma(j/4+1))^2}, \\[2.5ex]
V_{N-1} &\hspace*{-0.2cm}=\hspace*{-0.2cm}& \displaystyle\frac{(1+(-1)^{N-1})(\Gamma(N/2))^2}{2\pi (\Gamma(N/2+1/2))^2}, \\[2.5ex]
W_{N-1} &\hspace*{-0.2cm}=\hspace*{-0.2cm}& \displaystyle \frac{2 \Gamma(N-1/2)}{\sqrt{\pi} \Gamma(N)},
\end{array} \]	
and $\Gamma$ stands for the gamma function.

An important feature of (\ref{polig}) is that terms of order higher than $\varepsilon^{2N+1}$ can depend on $\cos(N-1)(t-\theta)$ but they are of smaller influence than the one of order $\varepsilon^{2N+1}$.

To obtain the Melnikov function we emphasize the convenience of developing $H_\varepsilon$ to order $2N+1$ because the first appearance of $\theta$
occurs at this order and the previous orders would yield zero.

At this point we apply the same steps as in the previous sections, arriving at the total derivative 
\[ \begin{array}{rcl}
\displaystyle \frac{d H_D}{d \tau} &\hspace*{-0.2cm}=\hspace*{-0.2cm}& \displaystyle -\mbox{$\frac{1}{4}$}\Theta_0^2 \sinh 2 \tau \sum_{j=1}^{2N-3}
  \varepsilon^j U_j \frac{2^{ j/2+1} (j+2)}{ (|\Theta_0| \cosh \tau)^{j+4}} \\
  && \displaystyle
  -\varepsilon^{2N-2} \frac{2^{N}}{\Theta^{2 N}_0 \cosh^{2N+1} \tau}
  \left(N V_{N-1} \sinh \tau  \right. \\[1ex] && \displaystyle \left. \hspace*{4.22cm} + W_{N-1} \left(N \sinh \tau \cos q(s_0,\tau) \right. \right. \\[1ex] && \displaystyle \left. \left. \hspace*{5.7cm} \mp (N-1) \sin q(s_0, \tau)\right)\right) \\ && + O(\varepsilon^{2N-1}),
        \end{array}\]
with
        \[  
        q(s_0,\tau) = (N-1) \big(s_0 \mp 4 \arctan(\tanh(\tau/2)) \pm
       \mbox{$ \frac{\Theta_0^3}{24\varepsilon^3} $} (9 \sinh \tau + \sinh 3 \tau)\big).  \]
       
Now, we observe that by the parity of the derivative with respect to $\tau$, the only term with no zero integral
in the derivative is that factorized by $W_{N-1}$, thus we introduce 
\[ \begin{array}{rcl} \varepsilon^{2N-2} M_{2N-2} &\hspace{-0.2cm}=\hspace{-0.2cm}& \displaystyle -\varepsilon^{2N-2} \frac{2^N W_{N-1}}{\Theta^{2 N}_0 \cosh^{2N+1} \tau} \left( N \sinh \tau\cos q(s_0,\tau) \right.\\ && \left.\hspace*{4.25cm} \mp (N-1) \sin q(s_0,\tau) \right).\end{array} \]
We perform the change $z = \sinh \tau$ and define the Melnikov function as
\begin{equation}\label{Melnipol} \begin{array}{rcl}
{\mathcal M}_{2N-2}(s_0; \Theta_0, \varepsilon) &\hspace*{-0.3cm}=\hspace*{-0.3cm}& \displaystyle -\frac{2^N W_{N-1}}{\Theta_0^{2N}} 
\int_{-\infty}^{\infty} \frac{1}{(z^2+1)^{N+1}} \left(N z \cos \tilde{q}(s_0,z) \right. \\ &&\left.  \hspace*{3cm} \mp (N-1) \sin \tilde{q}(s_0,z) \right) d z, \end{array}\end{equation}
with \[
\tilde{q}(s_0,z) = (N-1) \big(s_0 \pm \mbox{$\frac{\Theta_0^3}{6\varepsilon^3}$} z(z^2+3) \mp 4 \arctan(\tanh(\mbox{${1\over 2}$}{\rm arcsinh}z)) \big).  \]

In order to illustrate how the theory of this paper applies, we particularize the calculations for two specific cases, namely $N = 7, 8$.

When $N = 7$ we get
\[    {\mathcal M}_{12}(s_0; \Theta_0, \varepsilon) = \pm  \frac{231}{4 \Theta_0^{14}} {\mathcal F}_{12}(\Theta_0, \varepsilon) \sin 6 s_0 \] where
\[\begin{array}{rcl}
 {\mathcal F}_{12}(\Theta_0, \varepsilon) &\hspace*{-0.5cm}=\hspace*{-0.5cm}& \displaystyle 
 \int_{-\infty}^{\infty} \frac{1}{(z^2+1)^{14}}\left( p_1(z) \cos (\mbox{$\frac{\Theta^3_0}{\varepsilon^3}$}  z (z^2+3))\right.\\ &&  \displaystyle \left. \hspace*{2.82cm} + p_2(z) \sin (\mbox{$\frac{\Theta^3_0}{\varepsilon^3}$} z (z^2+3))\right) dz, \end{array}\]
 with \[ \hspace*{-0.2cm}\begin{array}{l}
 p_1(z) = 2 (45 z^{12}- 968 z^{10} + 4257 z^8- 5544 z^6+ 2255 z^4 - 240 z^2 + 3), \\[1ex]
 p_2(z) = z ( 7 z^{12} - 534 z^{10} + 4785 z^8 - 11220 z^6 + 8217 z^4  - 1782 z^2 + 79 ). \end{array} \]
 
 For $N = 8$ the corresponding Melnikov function reads as
\[{\mathcal M}_{14}(s_0; \Theta_0, \varepsilon) = \pm \frac{429}{\Theta_0^{16}} {\mathcal F}_{14}(\Theta_0, \varepsilon) \sin 7 s_0 \] where
\[\begin{array}{rcl}
 {\mathcal F}_{14}(\Theta_0,\varepsilon) &\hspace*{-0.5cm}=\hspace*{-0.5cm}& \displaystyle 
 \int_{-\infty}^{\infty} \frac{-1}{(z^2+1)^{16}}\left( p_3(z) \cos (\mbox{$\frac{7\Theta^3_0}{6\varepsilon^3}$} z (z^2+3))\right.\\ &&  \displaystyle \left. \hspace*{2.85cm} + p_4(z) \sin (\mbox{$\frac{7\Theta^3_0}{6\varepsilon^3}$} z (z^2+3))\right) dz, \end{array}\]
 with \[ \begin{array}{l}
 p_3(z) = 119 z^{14} - 3549 z^{12}  + 23023 z^{10} - 48477 z^8 + 37037 z^6 - 9919 z^4 \\
 \hspace*{1.4cm} + 749 z^2 - 7, \\[1ex]
 p_4(z) = 2 z (
  4 z^{14} - 413 z^{12} + 5278 z^{10} - 19019 z^8 + 24024 z^6 - 11011 z^4  \\
 \hspace*{2cm} + 1638 z^2 - 53). \end{array} \]

From the expressions of both Melnikov functions it is clearly deduced that the equations ${\mathcal M}_{12}=0$ and ${\mathcal M}_{14}=0$ have simple roots provided, respectively, ${\mathcal F}_{12}$ and ${\mathcal F}_{14}$ do not vanish. However ${\mathcal F}_{12}$, ${\mathcal F}_{14}$ are of the same type as the functions ${\mathcal F}_4$, ${\mathcal F}_{6,1}$, ${\mathcal F}_{6,2}$ of Section \ref{sec4} analyzed in the Appendix. Then they have a global maximum and a global minimum, their graphs cut the horizontal axis at a few points and tend asymptotically to zero as long as $\Theta_0$ tends to $\pm \infty$, see also Figure \ref{fig_F12-14.pdf}. 

\begin{figure}
\centering
\includegraphics[scale=0.66]{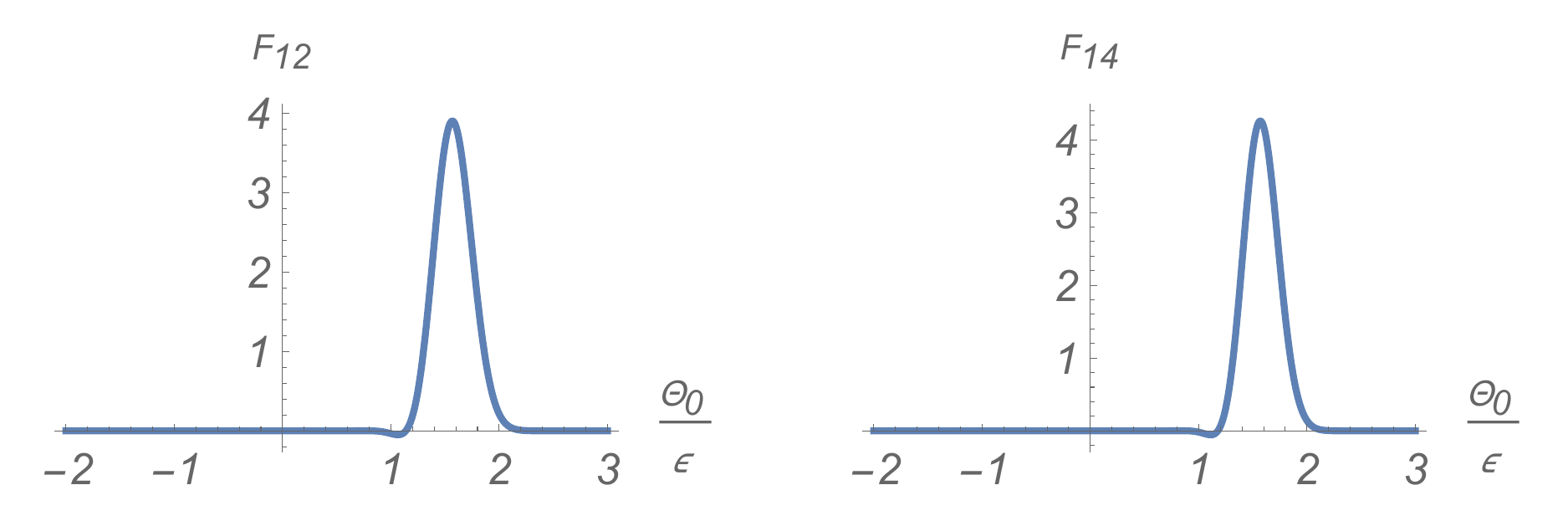}
\caption{On the left: graph of ${\mathcal F}_{12}$; on the right, graph of ${\mathcal F}_{14}$.}
\label{fig_F12-14.pdf}
\end{figure}

The asymptotic estimates of the Appendix hold provided $\varepsilon$ is small enough so that ${\mathcal F}_{12}$ and ${\mathcal F}_{14}$ do not vanish. Thus, for the polygonal restricted $7$- and  $8$-body problems, the stable and unstable manifolds of the parabolic orbits $\gamma$ intersect transversally.

Regarding the case $N = 4$, corresponding to the equilateral restricted 4-body problem of Subsection 6.2 when $m_1 = m_2 = m_3 = 1/3$, the Melnikov function obtained from (\ref{Melnipol}) is a particular case of the function ${\mathcal M}_6$ in (\ref{Mel6}), thus we can achieve the transversality from the analysis of this subsection. When $N = 5$, the square configuration studied in Subsection 6.3 is analyzed using the Melnikov function (\ref{Melnipol}) which is of order eight in $\varepsilon$. Proceeding similarly to what we did for $N=7,8$ we conclude that ${\mathcal M}_8$ has simple roots provided, thus extending the analysis performed in Subsection 6.3 when $a = b$.

Examining (\ref{Melnipol}) carefully, it is not difficult to infer that the integrals appearing in the Melnikov function ${\mathcal M}_{2N-2}$ are of the same type as ${\mathcal F}_4$, ${\mathcal F}_{6,1}$ and so on. We also take into account that the smallest harmonic appearing in the Melnikov function is $\sin (N-1) s_0$, thus we end up with an expression like
\[  {\mathcal M}_{2N-2}(s_0;\Theta_0, \varepsilon) = \pm  \frac{K}{\Theta_0^{2N}} {\mathcal F}_{2N-2}(\Theta_0, \varepsilon) \sin (N-1) s_0, \]
with $K$ a non-null constant.
Applying the estimates provided in the Appendix and taking $\varepsilon$ small enough, we conclude that $\varepsilon^{2N-2} {\mathcal M}_{2N-2}$ behaves like  $\varepsilon^{-N-1/2}\exp(-(N-1) \Theta_0^3/(3 \varepsilon^3))(1+O(\varepsilon))$ when $\Theta_0>0$ and $\varepsilon^{-N+1}\exp((N-1) \Theta_0^3/(3 \varepsilon^3))(1+O(\varepsilon))$ for negative $\Theta_0$. Thus, 
we can apply Theorem \ref{TeoM4}, achieving the transversality of the manifolds of $\gamma$
in the polygonal restricted $N$-body problem for all $N\ge 4$.


\section{Appendix: Qualitative study of functions ${\mathcal F}_4(\Theta_0,\varepsilon)$, ${\mathcal F}_{6,1}(\Theta_0,\varepsilon)$ and ${\mathcal F}_{6,2}(\Theta_0,\varepsilon)$}

The function $\mathcal{F}_4$ has been defined in (\ref{FF4}) and its graph is given in Figure \ref{configura}. Considered as a function in $z$, the integrand is smooth in $\mathbb{R}$ and bounded by above in the intervals in $(-\infty,-1] \cup [1,\infty)$ by the improperly integrable function $80/|z|^7$. Calling $\tilde{\Theta}_0=\Theta_0/\varepsilon$, the comparison test for improper integrals implies that ${\mathcal F}_4(\tilde{\Theta}_0)$ is a well defined smooth function for all $\tilde{\Theta}_0 \in \mathbb{R}$. In addition it has two zeroes, namely $\tilde{\Theta}_0^{'}=0$ and $\tilde{\Theta}_0^\ast = 0.61078210...$, and ${\mathcal F}_4(\tilde{\Theta}_0)<0$ for $\tilde{\Theta}_0 < \tilde{\Theta}_0^\ast$ (excepting at $\tilde{\Theta}_0^{'}$), while ${\mathcal F}_4(\tilde{\Theta}_0)>0$ for $\tilde{\Theta}_0 > \tilde{\Theta}_0^\ast$. The function $\mathcal{F}_4$ takes a global maximum and a global minimum values. Furthermore
\[ \lim_{\tilde{\Theta}_0 \to \pm \infty}  {\mathcal F}_4(\tilde{\Theta}_0) = 0.\]

\begin{figure}[h]
\centering
\includegraphics[scale=.45]{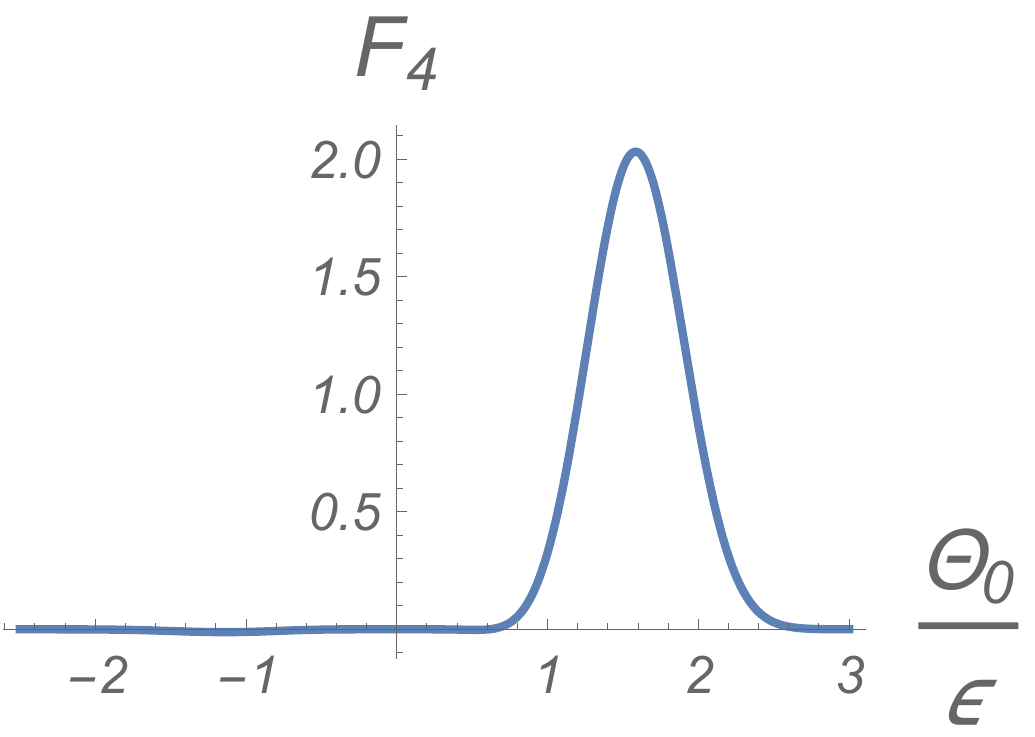}
\caption{The graph of the function ${\mathcal F}_4(\Theta_0,\varepsilon)$.}
\label{configura}
\end{figure}

The functions $\mathcal{F}_{6,1}$, $\mathcal{F}_{6,2}$ were introduced in (\ref{FF612}) and present an analogous behavior to the function $\mathcal{F}_{4}$, as it can be seen in Figure \ref{fig612}. The corresponding improper integrals are absolutely convergent. Specifically, $\mathcal{F}_{6,1}=0$ has its unique root at $\tilde{\Theta}_0=0$ whereas the roots of $\mathcal{F}_{6,2}=0$ occur at $\tilde{\Theta}_0=0$, $\tilde{\Theta}_0=0.15745028...$, $\tilde{\Theta}_0=0.87685728...$. Besides, $\mathcal{F}_{6,1} > 0$ when $\tilde{\Theta}_0<0$, $\mathcal{F}_{6,1} < 0$ when $\tilde{\Theta}_0 > 0$ while $\mathcal{F}_{6,2} > 0$ when $\tilde{\Theta}_0 < 0$ and $0.15745028... < \tilde{\Theta}_0 < 0.87685728...$, $\mathcal{F}_{6,2} < 0$ when $0 < \tilde{\Theta}_0 < 0.15745028...$ and $\tilde{\Theta}_0 > 0.87685728...$. As ${\mathcal F}_4$, the functions $\mathcal{F}_{6,1}$, $\mathcal{F}_{6,2}$ take a global maximum as well as a global minimum. Finally, 
\[ \lim_{\tilde{\Theta}_0\to \pm \infty}  {\mathcal F}_{6,1}(\tilde{\Theta}_0) = \lim_{\tilde{\Theta}_0\to \pm \infty}  {\mathcal F}_{6,2}(\tilde{\Theta}_0) = 0.\]

\begin{figure}
\centering\includegraphics[scale=.66]{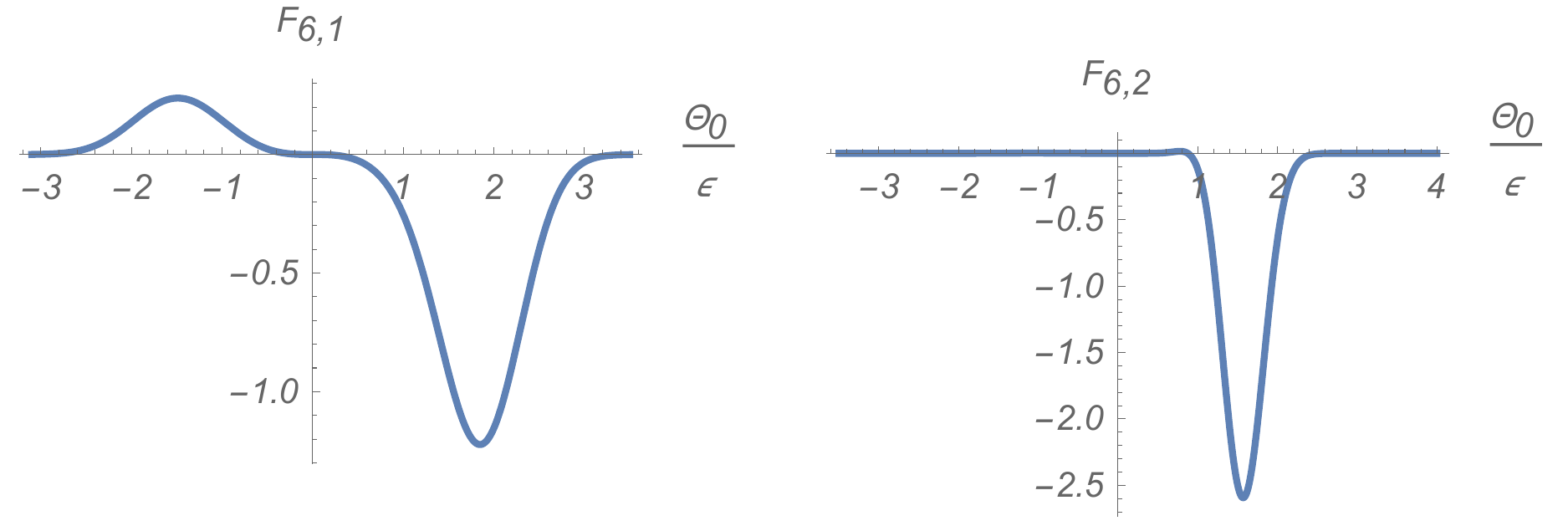} \caption{The graphs of the functions ${\mathcal F}_{6,1}(\Theta_0,\varepsilon)$
and ${\mathcal F}_{6,2}(\Theta_0,\varepsilon)$.}
\label{fig612}
\end{figure}

We remark that similar integrals have been analyzed and can be found in \cite{Regina,RegSim}. In fact, due to the highly oscillatory character of the integrals, an asymptotic analysis of the functions $\mathcal{F}_4$, $\mathcal{F}_{6,1}$, $\mathcal{F}_{6,2}$ and other related functions appearing in
the Melnikov functions obtained in Section \ref{sec4} has to be done.

Following \cite{RegSim} we introduce the improper integrals
\[ I_k(\delta) = \int_0^{\infty} \frac{\cos (\delta (z+z^3/3))}{(1+z^2)^k} dz, \quad 
   J_k(\delta) = \int_0^{\infty} \frac{z \sin (\delta (z+z^3/3))}{(1+z^2)^k} dz.\]
In \cite{Regina} it is proved that $J_k$ can be written in terms of $I_k$ through
\[ J_{k+2}(\delta) = \frac{\delta}{2(k+1)} I_k(\delta), \]
whereas for $\delta >0$ big enough, the following estimates hold:
\begin{equation} \label{esti} \begin{array}{rcl}
I_{2n-1}(\delta) &\hspace*{-0.2cm}=\hspace*{-0.2cm}& \displaystyle \exp(-2\delta/3) \left( \frac{\pi}{2^{n+1}(2n-2)!!} \delta^{n-1} + O(\delta^{n-3/2})\right), \\[1.8ex]
I_{2n}(\delta) &\hspace*{-0.2cm}=\hspace*{-0.2cm}& \displaystyle \exp(-2\delta/3) \left( \frac{\sqrt{\pi}}{2^{n+1}(2n-1)!!} \delta^{n-1/2} + O(\delta^{n-1}) \right). \end{array} \end{equation}

The functions $\mathcal{F}_4$, $\mathcal{F}_{6,1}$, $\mathcal{F}_{6,2}$ as well as the rest of the functions appearing in ${\mathcal M}_{2k}$ with $k > 3$ can be cast in terms of $I_k$ and $J_k$, after performing a partial fraction decomposition to each of the rational parts of them.

We start with $\Theta_0 > 0$ and take $\varepsilon$ small enough with $\varepsilon \ge \varepsilon_0 > 0$ to avoid the possible zeroes of the functions ${\mathcal F}_4$, ${\mathcal F}_{6,1}$, etc. Concerning ${\mathcal M}_4$ we apply the estimates (\ref{esti}) to $\mathcal{F}_4$, and after arranging the function  conveniently, we conclude that 
\[ \varepsilon^4 {\mathcal M}_4 = \mbox{$\frac{4 \sqrt{\pi}}{3}$} \varepsilon^{-7/2} \Theta_0^{3/2} e^{-\frac{2\Theta_0^3}{3\varepsilon^3}}(c_2 \sin 2 s_0 - c_3 \cos 2 s_0) (1+ O(\varepsilon)). \]

For ${\mathcal M}_6$ we get
\[ \begin{array}{rcl}
\varepsilon^6 {\mathcal M}_6 &\hspace*{-0.2cm}=\hspace*{-0.2cm}& -\mbox{$\frac{\sqrt{\pi}}{12\sqrt{2}}$} \varepsilon^{-3/2} \Theta_0^{-1/2} e^{-\frac{\Theta_0^3}{3\varepsilon^3}}(d_2 \cos s_0 - d_1 \sin s_0) (1 + O(\varepsilon) )\\
 &&-\mbox{$\frac{9\sqrt{3\pi}}{5\sqrt{2}}$} \varepsilon^{-9/2} \Theta_0^{5/2} e^{-\frac{\Theta_0^3}{\varepsilon^3}}(d_4 \cos 3s_0 - d_3 \sin 3s_0) (1+ O(\varepsilon)), \end{array} \]
 and similar expressions are obtained for ${\mathcal M}_8$, ${\mathcal M}_{10}$, and so on.
 
From the above calculations it is clear that for $\varepsilon$ small enough the most important terms are those related to $\exp(-\Theta_0^3/(3\varepsilon^3))$, then those related to $\exp(-2\Theta_0^3/(3\varepsilon^3))$, next those with $\exp(-\Theta_0^3/\varepsilon^3)$ and so on. Furthermore, we observe that the terms with asymptotic estimates having the factor $\exp(-\Theta_0^3/(3\varepsilon^3))$ correspond to the harmonics $\cos s_0$, $\sin s_0$, and in general, the asymptotic expressions with $\exp(-k \Theta_0^3/(3\varepsilon^3))$ are related to the harmonics $\cos k s_0$, $\sin k s_0$. In addition to this, for $k \ge 2$ the terms of $\cos k s_0$, $\sin k s_0$ are of order $\varepsilon^{-k-3/2} \exp(-k \Theta_0^3/(3\varepsilon^3))(1+ O(\varepsilon))$ while for $k=1$ the terms of $\cos s_0$, $\sin s_0$ have the estimate $\varepsilon^{-3/2} \exp(-\Theta_0^3/(3\varepsilon^3))(1+ O(\varepsilon))$. This in turn implies that the leading terms in the Melnikov function are the ones depending on the coefficients $d_1$, $d_2$, the next ones those with estimate $\varepsilon^{-1/2} \exp(-\Theta_0^3/(3\varepsilon^3))$ to which follows the rest of terms with harmonics $\cos s_0$, $\sin s_0$. Next we consider the main terms factorized by $\cos 2s_0$, $\sin 2s_0$, they are the ones depending on $c_2$, $c_3$. We continue with the higher order terms and so on. The Melnikov function 
becomes the formal Fourier series
\[{\mathcal M}(s_0; \Theta_0, \varepsilon) = \sum_{k=1}^\infty \alpha_k(\varepsilon) \cos k s_0 + \beta_k(\varepsilon) \sin k s_0 \]
with
\[ \begin{array}{l}
\alpha_1(\varepsilon) = \varepsilon^{-3/2} e^{-\frac{\Theta_0^3}{3\varepsilon^3}} (A_1 + O(\varepsilon)), \quad  \,\,\beta_1(\varepsilon) = \varepsilon^{-3/2} e^{-\frac{\Theta_0^3}{3\varepsilon^3}} (B_1 + O(\varepsilon)), \\[0.6ex]
\alpha_k(\varepsilon) = \varepsilon^{-k-3/2} e^{-\frac{k \Theta_0^3}{3\varepsilon^3}} (A_k + O(\varepsilon)), \,\beta_k(\varepsilon) = \varepsilon^{-k-3/2} e^{-\frac{k \Theta_0^3}{3\varepsilon^3}} (B_k + O(\varepsilon)), \end{array} \] for $k \ge 2$ and
\[ \begin{array}{ll}
 A_1 = -\frac{\sqrt{\pi}}{12\sqrt{2}} \Theta_0^{-1/2} d_2, &  
 B_1 = \frac{\sqrt{\pi}}{12\sqrt{2}} \Theta_0^{-1/2} d_1, \\[1.5ex]
 A_2 = \frac{4\sqrt{\pi}}{3} \Theta_0^{3/2} c_3, &  
 B_2 = -\frac{4\sqrt{\pi}}{3} \Theta_0^{3/2} c_2, \\[1.5ex]
&\hspace*{-4.2cm} A_k, B_k, \,\, k \ge 3 \,\,\,\, \mbox{constants independent of} \,\,\, \varepsilon.
 \end{array}\]

Regarding the estimate of ${\mathcal R}$ we use the ideas of Sanders \cite{Sanders} for the case
of exponentially small estimates. Given a vector field of the form $\dot{x} = f_0(x)+\varepsilon f_1(x,t,\varepsilon)$ with $x\in D\subset \mathbb{R}^2$ and $\varepsilon$ a small parameter, he defines the Melnikov integral $\Delta_\varepsilon(t,x)=\varepsilon^{-1} f_0(x^{s,u}_0(t))\wedge(x_\varepsilon^u(t)-x_\varepsilon^s(t))$ where $\wedge$ is the wedge product in $\mathbb{R}^2$, $x^{s,u}_0$ refers to the parametrization of the stable and unstable manifolds of the unperturbed system and $x_\varepsilon^u$, $x_\varepsilon^s$ denote solutions on 
the unstable and stable manifolds, respectively. After some assumptions on the smoothness of the vector field, and a lemma regarding the relationships between the unperturbed and perturbed manifolds,  Sanders arrives at an expression of the form
\[ \Delta_\varepsilon(t,x) = \Delta_0(x) + O(\varepsilon (1+e^{-\mu |t|})^2 \min \{1, e^{-\mu |t|}\}),\]
where $\Delta_0$ stands for the usual Melnikov function and $\mu$ is the Lipschitz constant associated to $f_0$. Systems of the type $\dot{x} =\varepsilon f(x,t,\varepsilon)$, after applying averaging and rescaling time by $\tau = \varepsilon t$, are transformed into $y'=dy/d\tau = f_0(y) + \varepsilon f_1(y, \tau/\varepsilon, \varepsilon)$, thus admitting the estimate $O(\varepsilon (1+\exp({-\mu |\tau|/\varepsilon}))^2 \min\{1, \exp({-\mu |\tau|/\varepsilon}) \})=O(\varepsilon \exp({-\mu |\tau|/\varepsilon}))$ for $\tau \neq 0$.

We adopt Sanders' point of view in our setting as follows. First, we notice that the hypotheses 
on the Hamiltonian and the existence of the manifolds are fulfilled. Then we realize that $t$ and $\tau$ are related through the change of time, assuming that $x(\tau) = \sqrt{2}/\Theta_0 \sech \tau+O(\varepsilon^3)$ we get $t = \Theta_0^3\tau/(2\varepsilon^3) + O(1)$. The Lipschitz constant of $H_D$ along the homoclinic $\xi(\tau)$ is calculated, obtaining $\mu = \sqrt{2}/\Theta_0$. Finally, after adjusting the factor $\varepsilon^4$ in the whole expression of $\Delta_\varepsilon$ so that we identify $\Delta_0$ with $\mathcal M$, we get an upper bound on $\mathcal R$ as
$O(\exp({-\Theta_0^2 |\tau|/(\sqrt{2} \varepsilon^3)}))$. To control the size of $\mathcal R$ we compare the estimate with the dominant term of $\mathcal M$. When $d_1$ or $d_2$ are not zero, due to the 
presence of the factor $\varepsilon^{-3/2}$ in $\alpha_1$, $\beta_1$, it is enough that $\exp({-\Theta_0^3/(3\varepsilon^3)}) > \exp({-\Theta_0^2 |\tau|/(\sqrt{2} \varepsilon^3)})$ from where it is deduced that $|\tau| \ge \frac{\sqrt{2}}{3} \Theta_0$, which is true in big portions of the manifolds as $\Theta_0$ is of moderate size. Next, the transversality condition is verified
 in the part of phase space where this restriction on $\tau$ holds, but it implies that transversality is satisfied for every $\tau$ as this property is preserved through diffeomorphisms. When $d_1 = d_2 = 0$ one compares $\exp({-k\Theta_0^3/(3\varepsilon^3)})$ with $\exp({-\Theta_0^2 |\tau|/(\sqrt{2} \varepsilon^3)})$, starting with $k= 2$.

When $\Theta_0 < 0$ we notice that to apply the estimates given above we should consider the integrals $I_k$, $J_k$ with $\delta < 0$. Then we notice that $I_k(\delta) = I_k(-\delta)$, $J_k(\delta) = -J_k(-\delta)$ and $J_{k+2}(\delta) = \delta/(2(k+1)) I_k(-\delta)$ and the estimates for $I_k$ given in (\ref{esti}) apply replacing $\delta$ by $-\delta$ in the expressions. Proceeding similarly to the case $\Theta_0 > 0$ we arrive at
\[ \begin{array}{rcl}
\varepsilon^4 {\mathcal M}_4 &\hspace*{-0.2cm}=\hspace*{-0.2cm}& \mbox{$\frac{5\pi}{8}$} \varepsilon^{-2} e^{\frac{2\Theta_0^3}{3\varepsilon^3}}(c_2 \sin 2s_0 - c_3 \cos 2s_0 ) (1 + O(\varepsilon) ), \\[1ex]
\varepsilon^6 {\mathcal M}_6 &\hspace*{-0.2cm}=\hspace*{-0.2cm}& -\mbox{$\frac{5\pi}{128}$} \Theta_0^{-2} e^{\frac{\Theta_0^3}{3\varepsilon^3}}(d_2 \cos s_0 - d_1 \sin s_0) (1 + O(\varepsilon) ) \\
 &&+ \mbox{$\frac{63\pi}{64}$} \varepsilon^{-3} \Theta_0 e^{\frac{\Theta_0^3}{\varepsilon^3}}(d_4 \cos 3s_0 - d_3 \sin 3s_0) (1+ O(\varepsilon)). \end{array} \]

Reasoning as in the positive case we realize that the main term in the Melnikov function is that of $\varepsilon^6 {\mathcal M}_6$ having coefficients $d_1$, $d_2$, then the rest of terms related to $\cos s_0$, $\sin s_0$, next the term associated to $\cos 2s_0$, $\sin 2s_0$ beginning with those whose coefficients are $c_2$, $c_3$, and so on. Moreover, when $k \ge 2$ the terms of $\cos k s_0$, $\sin k s_0$ behave like $\varepsilon^{-k} \exp(k \Theta_0^3/(3\varepsilon^3))(1+ O(\varepsilon))$, thus we obtain analogous results to the case $\Theta$ positive although with estimates of different order in $\varepsilon$.

The estimate analysis of $\mathcal R$ is alike the procedure for $\Theta_0>0$.

\section*{Acknowledgements}
We appreciate the comments of Prof. M. Guardia, P. Mart{\'\i}n and T.M. Seara who pointed out a crucial error in the previous version of the paper and helped substantially in its improvement.
The authors have received partial support from Project Grant Red de Cuerpos Acad\'emicos de Ecuaciones Diferenciales, Sistemas Din\'amicos y Estabilizaci\'on. PROMEP 2011-SEP, Mexico and from Projects 2014--59433--C2--1--P of the Ministry of Economy and Competitiveness of Spain and 2017--88137--C2--1--P of the Ministry of Economy, Industry and Competitiveness of Spain. The facilities provided by Cinvestav -- IPN (Mexico) are also acknowledged.

\end{document}